\documentclass[a4paper,reqno, 12pt]{amsart}

\title{Rational lines on cubic hypersurfaces}

\author{Julia Brandes}
\address{JB: Mathematical Sciences, Chalmers Institute of Technology and University of Gothenburg, 412 96 G{\"o}teborg, Sweden}
\email{brjulia@chalmers.se}

\author{Rainer Dietmann}
\address{RD: Department of Mathematics, Royal Holloway, University of London, Egham,
TW20 0EX, UK}
\email{rainer.dietmann@rhul.ac.uk}


\subjclass[2010]{11D72 (14G05, 11E76, 11D88, 14J70)}
\keywords{Forms in many variables, cubic forms, linear spaces, lines}

\usepackage[english]{babel}
\usepackage[T1]{fontenc}
\usepackage[latin1]{inputenc}
\usepackage{latexsym}
\usepackage{epsfig}
\usepackage{bm}
\usepackage{amssymb}
\usepackage{amsmath}
\usepackage{verbatim}
\usepackage{amsthm}
\usepackage{mathabx}
\usepackage{enumerate}
\usepackage{mathrsfs}
\usepackage[hmargin=3cm,vmargin=3cm]{geometry}

\makeatletter
\def\Ddots{\mathinner{\mkern1mu\raise\p@
\vbox{\kern7\p@\hbox{.}}\mkern2mu
\raise4\p@\hbox{.}\mkern2mu\raise7\p@\hbox{.}\mkern1mu}}
\makeatother

\relpenalty=9999
\binoppenalty=9999

\def\A{\mathbb A}
\def\C{\mathbb C}
\def\F{\mathbb F}

\def\N{\mathbb N}
\def\PP{\mathbb P}
\def\Q{\mathbb Q}
\def\R{\mathbb R}
\def\SS{\mathbb S}

\def\Z{\mathbb Z}

\def\L{\mathbb L}

\def\B#1{\mathbf{#1}}

\def\cal#1{\mathcal{#1}}

\def\mmod#1{\;(\mathrm{mod}\;{#1})}

\DeclareMathOperator{\rk}{rank}

\DeclareMathOperator{\sing}{Sing}

\DeclareMathOperator{\Span}{Span}
\DeclareMathOperator{\disc}{Disc}

\renewcommand\le{\leqslant}
\renewcommand\ge{\geqslant}

\hyphenation{Diet-mann}

\frenchspacing

\bibliographystyle{plain}

\newtheorem{thm}{Theorem}[section]
\newtheorem{lem}[thm]{Lemma}
\newtheorem{cor}[thm]{Corollary}

\theoremstyle{definition}

\theoremstyle{remark}

\numberwithin{equation}{section}

\newenvironment{pf}{\begin{proof}[Proof]}{\end{proof}}

\begin{document}

\begin{abstract}
    We show that any smooth projective cubic hypersurface of dimension at least $29$ over the rationals contains a rational line. A variation of our methods provides a similar result over $p$-adic fields. In both cases, we improve on previous results due to the second author and Wooley.

    We include an appendix in which we highlight some slight modifications to a recent result of Papanikolopoulos and Siksek. It follows that the set of rational points on smooth projective cubic hypersurfaces of dimension at least 29 is generated via secant and tangent constructions from just a single point.
\end{abstract}

\maketitle

\section{Introduction}

A celebrated theorem of Birch \cite{Bir:57} asserts that any rationally defined projective hypersurface of odd degree contains large rational linear spaces, provided only that the dimension of the hypersurface is sufficiently large in terms of its degree and the dimension of the linear space. Unfortunately, due to the great generality of the result, the bounds on the number of variables required are, by a felicitous expression of Birch, not even astronomical (see Wooley \cite{W:97fmv, W:98bir} for explicit results). By focussing on relevant special cases, however, one would expect to be able to take advantage of a richer box of tools tailored for purpose, and thereby to establish much improved bounds compared to the general case. For instance, Wooley \cite[Theorem 2(b)]{W:97linspcub} proved that every cubic hypersurface defined by a rational homogeneous polynomial in at least $37$ variables contains a rational line. Our first result in the work at hand improves this bound in the non-singular case.
\begin{thm}\label{t:lines}
    Let $F \in \Z[x_1, \ldots, x_n]$ be a non-singular cubic form in $n \ge 31$ variables. Then the hypersurface $\cal V$ defined by $F(\B x)=0$ contains a rational line.
\end{thm}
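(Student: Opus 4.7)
A rational line on $\cal V$ is parametrised as $\B x(t) = \B u + t \B v$ with $\B u, \B v \in \Q^n$ linearly independent. Expanding the identity $F(\B u + t \B v) \equiv 0$ in powers of $t$ yields the four-equation system
\[
F(\B u) = 0, \qquad \nabla F(\B u) \cdot \B v = 0, \qquad \nabla F(\B v) \cdot \B u = 0, \qquad F(\B v) = 0,
\]
consisting of two cubic forms and two bilinear quadratic forms in the $2n$ variables $(\B u, \B v)$. A direct assault on the full system via a four-dimensional Hardy--Littlewood circle method is likely to reproduce only a bound of the strength of Wooley's $n \geq 37$. My plan is instead to perform a descent: first locate a suitable rational point on $\cal V$, then solve the residual problem for $\B v$ in one fewer dimension.

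In the first stage, since $n \geq 31$ comfortably exceeds the threshold in Heath-Brown's theorem on non-singular cubic forms, the variety $\cal V$ certainly carries rational points, and non-singularity ensures that every non-zero rational point is smooth. Among these, I would select $\B u_0$ in general position, so that the Hessian $H(F)(\B u_0)$ has maximal rank. The locus $\{\det H(F) = 0\}$ is a proper subvariety of $\A^n$ (by a Gordan--Noether type argument, valid because $F$ essentially depends on all its variables), hence a generic $\B u_0$ will do. The plentiful supply of rational points furnished by the secant-and-tangent construction ensures that such an $\B u_0$ can indeed be chosen.

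With $\B u_0$ fixed, the remaining conditions on $\B v$ become: (i) $\B v$ lies in the tangent hyperplane $T_{\B u_0} = \{\nabla F(\B u_0) \cdot \B v = 0\} \cong \Q^{n-1}$; (ii) $\B v$ satisfies the quadric $Q(\B v) := \nabla F(\B v) \cdot \B u_0 = 0$, whose matrix is proportional to $H(F)(\B u_0)$ and hence non-singular; and (iii) $F(\B v) = 0$. Introducing linear coordinates on $T_{\B u_0}$, the task reduces to finding a non-trivial rational zero of one quadric and one cubic in $n - 1 \geq 30$ variables. I would carry out this step by invoking, or developing via a two-dimensional Hardy--Littlewood circle method, a theorem to the effect that any such mixed-degree system, with both forms of sufficiently large rank, admits a non-trivial rational solution once $n - 1 \geq 30$. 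The generic choice of $\B u_0$ ensures that the rank conditions feed cleanly into the circle method analysis.

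The principal obstacle is the analytic work of the third step. The mixed-degree circle method requires Weyl differencing applied to the cubic component while simultaneously handling the quadric through Gauss-sum and major-arc estimates, and the minor-arc bounds must satisfy a Birch-type rank condition in which the non-singularity of $F$ and the generic choice of $\B u_0$ both play a decisive role. Once a non-trivial $\B v$ is produced, its linear independence from $\B u_0$ is automatic, since the rational solutions of the reduced cubic-quadric system form a positive-dimensional variety, whereas $\B v \in \Q \B u_0$ would contribute only a lower-dimensional sublocus.
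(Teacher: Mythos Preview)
Your overall strategy---fix a rational base point $\B u_0 \in \cal V$ and solve the residual cubic--quadric--linear system for $\B v$---is exactly the paper's, and the analytic tool you anticipate already exists: Browning--Dietmann--Heath-Brown prove that a non-singular cubic and a quadric of rank $\ge 28$ in $\nu\ge 29$ variables have a common rational zero, so no new circle-method work is needed here.

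The genuine gap is in the choice of $\B u_0$. A Hasse-principle result for a cubic--quadric system cannot be stated in terms of rank alone: one must verify real solubility, and the Browning--Dietmann--Heath-Brown theorem requires the quadric $Q(\B v)=\Phi(\B u_0,\B v,\B v)$ to satisfy $\sigma(Q)\ge 2$, i.e.\ at least two eigenvalues of each sign. Your ``generic rational $\B u_0$ with full-rank Hessian'' does not deliver this. The signature of $H_{\B u_0}$ is a real-analytic, not a Zariski, condition; since $F$ is odd we have $H_{-\B u_0}=-H_{\B u_0}$, so both extremes of signature are realised on $\cal V(\R)$, and a Zariski-generic rational point has no reason to fall in the balanced regime. (The automatic isotropy $Q(\B u_0)=F(\B u_0)=0$ yields only $\sigma(Q)\ge 1$, which is not enough: with $\sigma=1$ the real quadric contains no line, and the standard argument for positivity of the singular integral breaks down.)

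The paper identifies this as the most delicate part of the argument and resolves it by a topological lemma: $\cal V(\SS)$ possesses a connected component $U$ with $U=-U$, so a path inside $U$ from some $\B x$ to $-\B x$, chosen to avoid the low-rank locus $\cal H(n-3)$, must pass through a real point $\B x_*$ with $\sigma(H_{\B x_*})\ge\lfloor(n-2)/2\rfloor$ and $\rk H_{\B x_*}\ge n-1$. Weak approximation on the smooth cubic then produces a rational $\B u_0$ nearby inheriting both bounds. Without this step the hypothesis $\sigma(Q)\ge 2$ is unverified and the appeal to the cubic--quadric theorem is illegitimate; what you flag as ``the principal obstacle'' (the analytic work) is in fact already in the literature, whereas the obstacle you overlook is the one that carries the proof.
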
\smallskip

Observe that, unlike in Wooley's work, we require a smoothness condition, although this may be somewhat relaxed at the cost of inflating the number of variables correspondingly (see e.g. the argument in Section 4 of \cite{B:15p}). Just like in the earlier work, we approach the problem by first fixing one rational point on the hypersurface, and then constructing a second point with the property that the line defined by these two points is contained in the hypersurface. As we will see, this strategy reduces to finding a rational solution to a system of one linear, one quadratic and one cubic equation. Whilst Wooley approaches this problem by identifying a large linear space on the quadratic hypersurface on which to solve the cubic, we will instead employ a recent result due to the second author jointly with Browning and Heath-Brown \cite{BDHB:15}. These authors establish a local-global principle for systems of one cubic and one quadratic equation under the condition that the underlying variety be smooth of sufficiently high dimension. However, in order to be able to make use of such a theorem, we need to ensure that the system has non-trivial solutions over $\R$, so we need to arrange for the quadratic equation arising in the system to satisfy some indefiniteness condition. This is in fact the most delicate part of the argument.\smallskip

As a corollary of Theorem \ref{t:lines}, we obtain a new result regarding prime solutions of polynomials. There has been some interest recently in trying to determine under what conditions general homogeneous polynomials have solutions in the primes (see \cite{LiuJY}, \cite{CM}). When the polynomials are diagonal, there is a vast amount of literature and our understanding of this problem approaches that of the integer case (see in particular Hua's book \cite{Hua:ATPN}, and \cite{KW:17} for recent results). For general polynomials of degree greater than two the bounds are much less satisfactory. However, if we weaken the condition that the variables should be prime, Theorem~\ref{t:lines} can be marshalled to improve upon the bounds in the cubic case.
\begin{cor}
    Let $F \in \Z[x_1, \dots, x_n]$ be a non-singular cubic form in at least $n \ge 31$ variables. Then there are integers $c_1, \ldots, c_n$ with $\gcd(c_1, \dots, c_n)=1$ such that the equation
    \begin{align*}
        F(c_1 p_1, c_2 p_2, \ldots, c_n p_n)=0
    \end{align*}
    has infinitely many solutions with $p_1, \ldots, p_n$ all prime and not all equal.
\end{cor}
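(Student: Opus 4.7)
The plan is to deduce the corollary from Theorem~\ref{t:lines} by realising the desired prime solutions as integer points on the rational line whose existence the theorem furnishes. The reduction itself is elementary; the real work lies in producing simultaneous prime values of a family of linear forms in two variables, for which we appeal to multi-dimensional sieve results.

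By Theorem~\ref{t:lines}, the hypersurface $\cal V$ contains a rational line $L$. Choose primitive linearly independent integer vectors $\B a, \B b \in \Z^n$ spanning the affine plane underlying $L$, so that $F(s\B a + t\B b) = 0$ for all $s, t \in \Z$ by homogeneity. Put $c_i := \gcd(a_i, b_i)$ and write $a_i = c_i a_i^*$, $b_i = c_i b_i^*$ with $\gcd(a_i^*, b_i^*) = 1$; the primitive linear form $L_i(s, t) := a_i^* s + b_i^* t$ then satisfies $s a_i + t b_i = c_i L_i(s, t)$, and the primitivity of $\B a, \B b$ forces $\gcd(c_1, \ldots, c_n) = 1$. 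Thus each coprime pair $(s, t) \in \Z^2$ for which every $L_i(s, t)$ is prime supplies a solution $F(c_1 p_1, \ldots, c_n p_n) = 0$ with $p_i := L_i(s, t)$.

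The ``not all equal'' clause comes essentially for free. Since $\B a, \B b$ are linearly independent, the $n \times 2$ matrix $[\B a \mid \B b]$ has rank two, so some pair of rows $(a_i, b_i)$, $(a_j, b_j)$ is itself linearly independent, rendering $L_i$ and $L_j$ non-proportional. Hence $L_i(s, t) \ne L_j(s, t)$ outside a proper subvariety of $\A^2$, and the primes $p_i$ fail to all coincide for almost every pair $(s, t)$ produced below.

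The main obstacle is the simultaneous primality of the $n \ge 31$ linear forms $L_1, \ldots, L_n$ in only two variables $s, t$; no elementary Dirichlet-type argument can succeed with so many forms and so few variables. However, after a standard admissibility step (a $W$-trick to kill small-prime obstructions) and, in the degenerate case where $L$ lies in coordinate hyperplanes of the shape $\{\alpha x_i = \beta x_j\}$, collapsing each proportional block of $L_i$'s into a single form (which only helps, since it allows those coordinates to share a prime value), the resulting system of pairwise non-proportional linear forms is of finite complexity in the sense of Green, Tao, and Ziegler. Their theorem on prime values of such systems then furnishes infinitely many coprime pairs $(s, t) \in \Z^2$ with $L_1(s, t), \ldots, L_n(s, t)$ all prime, completing the proof.
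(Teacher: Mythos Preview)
The paper derives this corollary in one line by inserting Theorem~\ref{t:lines} into Lemma~3 of \cite{BDLW}; your proposal essentially attempts to reprove that lemma. The reduction from a rational line to simultaneous prime values of the binary forms $L_i(s,t)=a_i^* s + b_i^* t$ is set up correctly, and Green--Tao--Ziegler is the right tool. There is, however, a genuine gap at the admissibility step.

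Your claim that ``a $W$-trick kills small-prime obstructions'' is not correct as stated. The $W$-trick is a device internal to the proof of the GTZ asymptotic, used to regularise the von Mangoldt function; it does nothing to repair a system whose local factor $\beta_p$ vanishes. If for some prime $p\le m-1$ the reductions of the $m$ pairwise non-proportional primitive forms $L_i$ cover all of $\PP^1(\F_p)$ --- and this already happens at $p=2$ whenever the three directions $s$, $t$, $s+t$ all occur among the $L_i$ --- then for \emph{every} $(s,t)$ some $L_i(s,t)$ is divisible by $p$, the local density is zero, and GTZ yields no solutions at all. With up to $n\ge 31$ coordinate forms, nothing in your construction rules this out; it depends entirely on the arithmetic of the particular line produced by Theorem~\ref{t:lines}.

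The remedy --- and this is the substantive content of the lemma the paper cites --- is to exploit the freedom in the $c_i$: one restricts $(s,t)$ to a suitably chosen residue class modulo a product of small primes and absorbs the resulting fixed small-prime divisors of $L_i(s,t)$ into the corresponding $c_i$, thereby manufacturing an admissible affine-linear system to which GTZ genuinely applies. Your overall strategy is sound, but this step requires real work and cannot be dismissed in a phrase.
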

This follows upon inserting Theorem~\ref{t:lines} into Lemma~3 of Br\"udern, Dietmann, Liu and Wooley \cite{BDLW}, and it improves upon the previous bound $n \ge 37$ obtained in the same manner from \cite[Theorem 2(b)]{W:97linspcub}.\smallskip

The keen reader will be poised to inquire about lower bounds on the number of variables necessary to guarantee the existence of rational lines on cubic hypersurfaces. A necessary condition for a hypersurface to contain a rational line is that it should contain a line over all completions of $\Q$. In this vein, the second author and Wooley \cite[Theorem~1]{DW:03} showed that, for all primes $p$, every cubic hypersurface in at least $23$ variables contains a $p$-adic line. We are able to reduce this bound further.
\begin{thm}\label{t:Qp}
	Let $p$ be a  rational prime and let $F \in \Q_p[x_1, \ldots, x_n]$ be a cubic form defining a hypersurface $\cal V=
\{\mathbf{x} \in \Q_p^n: F(\mathbf{x})=0\}$. Then $\cal V$
contains a line defined over $\Q_p$ if at least
one of the two following conditions is satisfied:
	\begin{enumerate}[(a)]
		\item \label{it:allp}
		$n \ge 22$, or
                \item[or]
		\item \label{it:largep}
		$n \ge 16$ and $p \ge 19$. 
	\end{enumerate}			
\end{thm}
On the other hand, we are able to construct cubic forms in $11$ variables which define a variety devoid of $\Q_p$-lines for certain small primes. 
\begin{thm}\label{t:lower}
    For every $p \in \{2, 3, 5\}$ there exists an explicit smooth cubic hypersurface defined by a non-singular homogeneous integer cubic polynomial in $11$ variables that fails to contain a line over $\Q_p$.
\end{thm}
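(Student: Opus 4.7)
The strategy is, for each $p \in \{2,3,5\}$, to give an explicit smooth cubic form $F_p \in \Z[x_1, \ldots, x_{11}]$ and then to rule out any $\Q_p$-line on the projective hypersurface $\mathcal{V}(F_p)$ by a $p$-adic descent argument. Note that any cubic form in $n \ge 10$ variables over $\Q_p$ has a non-trivial $\Q_p$-zero by the theorem of Demyanov--Lewis, so the target form necessarily has $\Q_p$-points; the work therefore lies in showing that no such point extends to a line on $\mathcal{V}(F_p)$.

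For the construction, let $K_p$ be a cubic field extension of $\Q_p$ and $N_p$ its norm form, which is a ternary cubic over $\Z_p$ with only the trivial $\Z_p$-zero. A natural candidate to consider is a block form in which three copies of $N_p$ in disjoint variable groups are weighted by $1, p, p^2$ (in the spirit of the classical anisotropic construction of Davenport--Lewis in nine variables), augmented by a carefully chosen binary-cubic term $G_p(x_{10}, x_{11})$ in the remaining two variables:
\begin{equation*}
	F_p(\B x) = N_p(x_1,x_2,x_3) + p\,N_p(x_4,x_5,x_6) + p^2\,N_p(x_7,x_8,x_9) + G_p(x_{10},x_{11}).
\end{equation*}
The choice of $K_p$ and $G_p$ is made for each $p$ so as to ensure that $F_p$ is smooth as a projective hypersurface (which reduces to a block-by-block Jacobian check) and that the descent below closes.

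To show that $\mathcal{V}(F_p)$ contains no $\Q_p$-line, suppose for contradiction the existence of $\B a, \B b \in \Z_p^{11}$ spanning a saturated rank-two $\Z_p$-sublattice such that $F_p(\B a s + \B b t)$ vanishes identically in $(s,t)$. Comparing coefficients yields the four scalar equations
\begin{equation*}
	F_p(\B a) = F_p(\B b) = T_p(\B a,\B a,\B b) = T_p(\B a,\B b,\B b) = 0,
\end{equation*}
with $T_p$ the polar trilinear form of $F_p$. Decomposing $\B a$ and $\B b$ along the blocks of $F_p$ and reducing these equations modulo successive powers of $p$, the anisotropy of $N_p$ combined with the $p$-weighting produces a chain of forced divisibilities on the coordinates of $\B a$ and $\B b$, while the bilinear conditions $T_p(\B a,\B a,\B b) = T_p(\B a,\B b,\B b) = 0$ are used to handle the cross-block interactions and to propagate the divisibility to the $G_p$-block. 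Iterating yields $\B a, \B b \in p\Z_p^{11}$, contradicting saturation of the lattice.

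The main obstacle is the final stage of this descent. Because Demyanov--Lewis precludes any argument based on $F_p$ being globally anisotropic in $11$ variables, the two bilinear conditions are indispensable, and the precise shape of $G_p$ has to be calibrated individually to each prime. In particular the splitting behaviour of cubic extensions of $\Q_p$ differs markedly between $p=3$ (where one must allow a wildly ramified $K_3$) and $p \in \{2,5\}$ (where the unramified cubic extension may be used), so the bookkeeping of $p$-adic valuations has to be carried out separately in each case. Verifying smoothness of each explicit $F_p$ and pushing the descent through for each $p \in \{2,3,5\}$ is where the bulk of the work will lie.
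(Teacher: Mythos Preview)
Your descent has a genuine gap at its very first step. Modulo $p$, your form reduces to
\[
	N_p(x_1,x_2,x_3) + G_p(x_{10},x_{11}),
\]
since the middle two blocks carry coefficients $p$ and $p^2$. Thus $F_p(\B a)\equiv 0\pmod p$ reads $N_p(a_1,a_2,a_3)\equiv -G_p(a_{10},a_{11})$, and anisotropy of $N_p$ alone forces nothing: this five-variable cubic over $\F_p$ certainly has nontrivial points (Chevalley--Warning), so you cannot conclude $(a_1,a_2,a_3)\equiv 0$, and the bilinear conditions, which likewise mix the $N_p$- and $G_p$-blocks at this level, do not separate them. For the same reason your stated endpoint ``iterating yields $\B a,\B b\in p\Z_p^{11}$'' cannot hold: any $\F_p$-point of the reduction lifts to a primitive $\B a$ with $F_p(\B a)\equiv 0$. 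What the argument actually requires at step one is that the five-variable form $N_p+G_p$ contain no $\F_p$-\emph{line}; only then do the four conditions taken together modulo $p$ force $\B a,\B b$ to be $\F_p$-dependent in the coordinates $(1,2,3,10,11)$, after which one proceeds with a single combination $\B z$ (not both of $\B a,\B b$) and concludes $\B z\equiv 0\pmod p$, contradicting saturation.

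The paper secures precisely this ``no $\F_p$-line on the top block'' by taking as its leading five-variable piece one of the explicit smooth cubic threefolds $H_p$ of Debarre--Laface--Roulleau which are known to contain no $\F_p$-line for $p\in\{2,3,5\}$, and then padding with two anisotropic \emph{ternary} blocks weighted by $p$ and $p^2$:
\[
	F_p = H_p(x_1,\ldots,x_5) + p\,G_p(x_6,x_7,x_8) + p^2\,G_p(x_9,x_{10},x_{11}).
\]
The descent then runs on a single $\B z$: the no-line property forces $\B z_1\equiv 0$, after which anisotropy of the ternary $G_p$ forces $\B z_2\equiv 0$ (mod $p^2$) and $\B z_3\equiv 0$ (mod $p^3$). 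Your scheme could be salvaged only if you can \emph{exhibit}, for each $p$, a binary $G_p$ making $N_p+G_p$ line-free over $\F_p$; that is the missing ingredient, not the ``final stage'' you flag, and nothing in your outline supplies it.
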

The authors are not aware of any viable heuristic argument indicating the true bound on the number of variables required to guarantee the existence of a $p$-adic or rational line. Unlike in the complex case, where it is well known that every smooth cubic surface contains 27 lines, the situation over $\Q$ and $\Q_p$ is much less satisfactorily understood. In particular, given that at least ten variables are needed to ensure that a cubic hypersurface contains a $p$-adic point for all primes $p$, it seems unlikely that the result in Theorem~\ref{t:lower} should be best possible. On the other hand, the proofs of Theorems~\ref{t:lines} and \ref{t:Qp} admit for a substantial amount of flexibility in the construction of the line. It thus seems conceivable that our proofs may be strengthened to show that, within a suitable bounded region on the hypersurface, almost every point lies on a line. Consequently, one might speculate that the true bound on the number of variables required to ensure the existence of a rational or $p$-adic line on $\cal V$, without any information regarding the density of such lines, could be substantially lower and might even lie in the proximity of the lower bound of Theorem~\ref{t:lower}. 
\smallskip

We include an appendix on recent work by Papanikolopoulos and Siksek \cite{PapaSik}. These authors used the results of \cite{BDHB:15} in order to show that for any smooth projective rationally defined cubic surface $\cal V$ of dimension at least $48$, the set $\cal V(\Q)$ of rational points on $\cal V$ can be obtained via secant and tangent operations from just a single point. We sketch a slight modification to their argument that shows how the same conclusion can be obtained for all smooth rationally defined cubic surfaces $\cal V$ of dimension at least $29$. \medskip

\textbf{Acknowledgements.} This work was conceived during a visit of the second author at the Department of Mathematics at Chalmers/University of Gothenburg, followed by a visit of both authors to the Fields Institute in Toronto during the Thematic Program on Unlikely Intersections, Heights, and Efficient Congruencing. The authors would like to thank both institutions for excellent working conditions. Likewise, we are indebted to Karim Johannes Becher, Johan Bj\"orklund, Tim Browning, Oscar Marmon and Per Salberger for many enlightening discussions. The first author was supported by the National Science Foundation under Grant No. DMS-1440140 while being in residence at the Mathematical Sciences Research Institute in Berkeley, California, during the Spring 2017 semester, as well as a Starting Grant from the Swedish Research Council under grant agreement no. 2017-05110.

\section{The construction of rational lines}\label{s:line}

We will assume throughout that $F \in \Z[x_1, \dots, x_n]$ is a non-singular cubic form defining a hypersurface $\cal V \subseteq \A^n$. We then define the polar form $\Phi$ associated with $F$ as the unique symmetric trilinear form with the property that $\Phi(\B x, \B x, \B x) = F(\B x)$; this form has coefficients in $\Z/6$. In this notation, two points $\B x, \B y \in \A^n$ define a line on $\cal V$ if they are linearly independent and satisfy 
\begin{align*}
	F(\B x) = \Phi(\B x, \B x, \B y) = \Phi(\B x, \B y, \B y) = F(\B y) = 0.
\end{align*}
We therefore aim to find a base point $\B x_0 \in \cal V(\Q)$ and then solve the residual system of equations in $\B y$. This system consists of one cubic, one quadratic and one linear equation, with the coefficients of the latter two depending on the base point $\B x_0$. In particular, $\B x_0$ should be chosen in such a way that the quadratic equation is indefinite and such as to provide the greatest possible control over the singularities of the system. To this end, we need to establish some facts about the real geometry of $\cal V$. 

It will be convenient to work on the unit hypersphere $\SS^{n-1} = (\R^n \setminus \{\bm 0\} )/\!\sim$, where two points $\B x$, $\B x'$ are considered equivalent if $\B x = \lambda \B x'$ for some $\lambda \in \R_+$. In a similar vein, we write $\SS^{n-1}(\Q) = (\Q^n \setminus \{\bm 0\} )/\!\!\sim$, where we note that the scaling factor $\lambda$ immanent in the equivalence relation still runs over all positive real numbers. We can map $\SS^{n-1}$ onto the real projective space $\R\PP^{n-1}$ by identifying the points $\B x$ and $-\B x$; this map is evidently a double cover of $\R\PP^{n-1}$. In analogy to the usual notational conventions, whenever $X \subseteq \A^n$ is a projective variety, we write $X(\SS)$ when $X$ is viewed as a variety over the unit hypersphere. In a similar manner, we use the notation $\dim_\SS X$ to denote the dimension of $X$, viewed as a manifold on $\SS^{n-1}$. The definition of $\SS^{n-1}$ here implies that $\dim_\SS X = \dim_{\R\PP} X = \dim_\R X -1$. 

Whilst the following statement can be derived from standard facts on the projective geometry of real cubic hypersurfaces such as presented in \cite[Section 4.3]{viro}, we choose to give a fairly elementary, self-contained proof of the result in the formulation relevant to our setting. 

\begin{lem}\label{l:cc}
    Let $F \in \R[x_1, \ldots, x_n]$ be a non-singular homogeneous real cubic polynomial defining a variety $\cal V$, and suppose that $n \ge 3$. Then $\cal V(\SS)$ has a connected component $U \subseteq \SS^{n-1}$ with dimension $n-2$ over $\SS$, having the property that $U = - U$.
\end{lem}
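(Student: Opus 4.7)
The plan is to combine the smooth manifold structure of $\cal V(\SS)$ with a mod-$2$ line-intersection argument, and then exploit the vanishing of top-codimension $\Z/2$-homology on the ambient sphere, to force a self-antipodal component to appear. As a preliminary, I would verify that $\cal V(\SS)$ is a smooth closed $(n-2)$-dimensional submanifold of $\SS^{n-1}$. Non-singularity of $F$ gives $\nabla F(\B x)\neq \B 0$ on $\cal V\setminus\{\B 0\}$, and Euler's identity $\B x \cdot \nabla F(\B x)=3F(\B x)$ shows that $\nabla F$ already lies in the tangent space to $\SS^{n-1}$ at every point of $\cal V(\SS)$. Hence $0$ is a regular value of $F|_{\SS^{n-1}}$ and $\cal V(\SS)$ is an $(n-2)$-manifold, so each of its components automatically has the claimed dimension. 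The antipodal involution $\sigma:\B x\mapsto -\B x$ preserves $\cal V(\SS)$ since $F$ is odd, and it suffices to produce a single component fixed by $\sigma$.

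Next I would descend to $\R\PP^{n-1}=\SS^{n-1}/\sigma$ and exhibit a connected component of $\cal V(\R\PP^{n-1})$ of odd $\Z/2$-intersection with a line. For a generic $2$-plane $\Pi \subseteq \R^n$ through the origin, $F|_\Pi$ is a non-zero binary cubic with three distinct roots in $\PP^1(\C)$; since the coefficients are real, either one or three of these roots are real. Equivalently, a generic line $\R\PP^1\subseteq \R\PP^{n-1}$ meets $\cal V(\R\PP^{n-1})$ transversely in an odd number of points, so at least one connected component $V$ of $\cal V(\R\PP^{n-1})$ satisfies $|V\cap \R\PP^1|\equiv 1\pmod 2$.

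Finally, I would lift $V$ back to the sphere. Let $q:\SS^{n-1}\to \R\PP^{n-1}$ denote the double cover, set $U:=q^{-1}(V)\subseteq \cal V(\SS)$, and note that $U$ is closed, $\sigma$-invariant, and that $q|_U:U\to V$ is a degree-$2$ covering map of the connected base $V$, so $U$ has one or two connected components. If $U$ is connected, then $U$ itself is a self-antipodal component of $\cal V(\SS)$ and we are done. Otherwise $U=W\sqcup W'$; if $\sigma$ fixes both components, then $W=-W$ is already self-antipodal, while if $\sigma$ swaps them, then $W'=-W$ and $q$ restricts to a homeomorphism from the closed smooth $(n-2)$-submanifold $W\subseteq \SS^{n-1}$ onto $V$. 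In this swap case, the vanishing of $H_{n-2}(\SS^{n-1};\Z/2)$ for $n\geq 3$ implies that $W$ separates $\SS^{n-1}$ and therefore meets any transverse great circle in an even number of points. Applying this to the great circle $\SS^1=q^{-1}(\R\PP^1)$ and using that $q$ is $2$-to-$1$ gives $|V\cap \R\PP^1|=|W\cap \SS^1|$, which is even, contradicting the conclusion of the previous paragraph. Hence the swap case does not occur and a self-antipodal component exists.

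The main obstacle is the final separation statement: one needs the classical fact that any closed smooth codimension-$1$ submanifold of $\SS^{n-1}$ is null-homologous mod $2$ and therefore separates the sphere. This is standard but, if one wishes to stay within the elementary framework announced in the paper, it may be cleanest either to quote a Jordan--Brouwer type separation theorem or to verify the parity of intersections directly from the $\Z/2$-intersection pairing, rather than invoking singular homology in full.
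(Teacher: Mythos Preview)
Your argument is correct (for $n\ge 3$, which is all that is needed) and genuinely different from the paper's. The paper proceeds entirely by elementary path--tracking: after writing $F$ as a cubic in $x_n$, it chooses a symmetric closed loop $\ell$ in $\R^{n-1}$ through a generic point $\B y_0$ and its antipode, considers the set $C\subseteq \ell\times\R$ of zeros of the one-variable cubics $f_{\ell(t)}(x)=F(\ell(t),x)$, and performs a case analysis on the winding numbers of the incontractible components of $C$ over $\ell$ (multiplicity $1$; or multiplicity $3$ realised as one loop of winding number $3$ or three loops of winding number $1$), in each case exhibiting by hand a path on $\cal V$ joining antipodal points. Your proof replaces this combinatorial analysis by a clean $\Z/2$-topology argument: the odd mod-$2$ intersection of $\cal V(\R\PP^{n-1})$ with a generic line singles out a component $V$, and the swap case for its lift is excluded because a closed codimension-$1$ submanifold of $\SS^{n-1}$ is null-homologous mod $2$ and hence meets any transverse circle evenly. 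Both routes ultimately exploit the same parity phenomenon (a real cubic has $1$ or $3$ roots), but the paper's version stays within elementary real analysis and ad hoc covering arguments, whereas yours is shorter and more conceptual at the cost of invoking Jordan--Brouwer separation or the $\Z/2$-intersection pairing. One small remark: in the swap case you do not actually need the separation statement itself; it suffices to note that $[W]=0$ in $H_{n-2}(\SS^{n-1};\Z/2)$ forces the mod-$2$ intersection number $[W]\cdot[\SS^1]$ to vanish, which is exactly the parity you use.
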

\begin{pf}
    Consider the affine variety $\cal V(\R)\subseteq \R^n$. After a suitable linear change of variables, we may assume that $F$ is cubic in $x_n$. Given any point $\B y \in \R^{n-1}$, set $f_{\B y}(x) = F(\B y, x)$. The polynomials $f_{\B y}$ are real cubic polynomials in one variable, and thus have either one or three real zeros (with multiplicity). The coefficients of the polynomials $f_{\B y}$ are continuous functions of $\B y$. It therefore follows that the zeros of the polynomials $f_{\B y}$ vary continuously with $\B y$, and they lie in a bounded domain whenever $\B y$ lies in a bounded domain.

    Fix a generic point $\B y_0 \in \R^{n-1}$, and pick a path $\ell \subseteq \R^{n-1}$ from $\B y_0$ to $- \B y_0$ avoiding the origin, so that $\ell(0)=\B y_0$ and $\ell(1) = -\B y_0$. For $-1 \le t \le 0$ set $\ell(t) = -\ell(1+t)$, then $\ell: [-1,1] \to \R^{n-1}$ describes a closed symmetric loop through $\B y_0$ and $- \B y_0$. It is clearly possible to choose $\ell$ generically in such a way that this loop is homeomorphic to $\SS^1$. Furthermore, since $F$ is homogeneous, it is clear that whenever $x$ is a zero of $f_{\B y}$ for some $\B y \in \R^{n-1}$, then $-x$ is a zero of $f_{-\B y}$. 
    
    Let $C$ denote the locus of points $(\ell(t), x(t))$ satisfying $f_{\ell(t)}(x(t))=0$, as $t$ ranges over $[-1,1]$. Here, $x(t)$ denotes the locus of solutions to $f_{\ell (t)}$ and may be viewed as a multi-valued map. Since the solutions $x(t)$ vary continuously with $t$, the set $C$ consists of a number of (possibly intersecting) closed loops on the cylinder $\ell \times \R$. Denote by $C^*$ the union of the incontractible loops in $C$. There is a homotopy transformation $\phi$ of $\ell \times \R$ with the property that $C'=\phi(C^*)$ has the same multiplicity over $\ell(t)$ for all points $t \in [-1,1]$; this transformation has the effect of straightening out any S-shaped bends within $C^*$. Since every contractible loop and every S-bend contribute an even number of additional zeros to the polynomials $f_{\ell(t)}$, and $C$ covers each point of $\ell$ with multiplicity either one or three, the same remains true for $C'$. Note also that, since $C$ and thus $C^*$ is symmetric with respect to the origin, the transformations underlying the homotopy between $C^*$ and $C'$ can be assumed to be symmetric with respect to the origin. Hence whenever there is a path $\gamma$ connecting two antipodal points on $C'$, the pre-image $\phi^{-1}(\gamma)$ of that path connects two antipodal points on $C^*$ and therefore also on $C$.

    Consider first the case where $C'$ covers $\ell$ with multiplicity $1$. In this case, $C'$ consists of one loop connecting a zero of $f_{\B y_0}$ with a zero of $f_{- \B y_0}$. Since $C'$ is symmetric with respect to the origin, and the map $\phi$ had been symmetric, this loop connects two antipodal points.

    Now consider the case that $C'$ covers $\ell$ with multiplicity $3$. Since the operations performed in order to simplify $C$ to $C'$ never increase the multiplicity by which the set passes over a certain point $\ell(t)$, it follows that $C$ itself covers $\ell$ with multiplicity $3$. This is possible either as one loop with winding number $3$, or three loops with winding number $1$ each. The case of there being two loops with winding numbers $1$ and $2$, respectively, is excluded by symmetry, as any loop with winding number $2$ would have an odd number of self-intersection points on the cylinder $\ell \times \R$, an impossible feat for a graph that is symmetric with respect to the origin. In the case of there being one loop with winding number $3$, this loop runs through all three zeros of $f_{\B y_0}$ and also through all three zeros of $f_{-\B y_0}$, thus forming a path between antipodal points. If there are three loops with winding number $1$ each, we use symmetry again. Let $y_1, y_2, y_3$ be the zeros of $f_{\B y_0}$, then by symmetry the zeros of $f_{- \B y_0}$ are given by $-y_1, -y_2, -y_3$. If $(\B y_0, y_1)$ and $(-\B y_0, -y_1)$ lie on the same loop, the statement is established, so assume without loss of generality that $(\B y_0, y_1)$ lies on the same loop as $(-\B y_0,-y_2)$. In this case symmetry demands that $(\B y_0, y_2)$ and $(-\B y_0,-y_1)$ are joined by a common loop, and we are forced to 
    conclude that the third loop runs through $(\B y_0,y_3)$ and $(-\B y_0,-y_3)$ and thus connects two 
    antipodal points. 
    
    We have therefore shown that there exists a path $\gamma \subseteq \cal V$ connecting a pair of antipodal points $\B x_0$ and $- \B x_0$ on $\cal V$. Since the path $\gamma$ avoids the origin, it follows that both points $\B x_0$ and $- \B x_0$ lie on the same connected component $U$ of $\cal V(\SS)$. Furthermore, when $\B x_1 \in U$ is arbitrary, then it lies on the same connected component as $\B x_0$. By symmetry, $-\B x_1$ lies on the same connected component as $-\B x_0$ and thus on $U$ also. We infer that $U = -U$. Finally, since $F$ is non-singular, the point $\B x_0 \in \cal V$ is non-singular over $\R$, which implies that $U$ is of the expected dimension over $\SS$. 
\end{pf}

We now return to the system 
\begin{align}\label{eq:sys-1}
	\Phi(\B x_0, \B x_0, \B y) = \Phi(\B x_0, \B y, \B y) =F(\B y)= 0.
\end{align}
Denote by $H_{\B x}$ the matrix associated to the quadratic form $\Phi(\B x, \B y, \B y)$. This matrix is up to a constant factor equal to the Hessian of $F$ at $\B x$, and we will use both terms interchangeably. We then set
\begin{align*}
    \cal H(k)= \{\B x \in \A^{n}: \rk H_{\B x} \le k\}.
\end{align*}
For a square matrix $M$ we write $\sigma^+(M)$ and $\sigma^-(M)$ for the number of positive and negative eigenvalues of $M$, respectively, and set $\sigma(M) = \min\{\sigma^+(M), \sigma^-(M)\}$.
We can now prove the main auxiliary result that is needed for the proof of Theorem~\ref{t:lines}.
\begin{lem}\label{l:realmainlem}
    Let $F$ be a non-singular real cubic form with polar form $\Phi$, and let $\cal V$ denote the hypersurface defined by the vanishing of $F$. Furthermore, assume that $n \ge 4$. There exists $\B x_* \in \SS^{n-1}$ such that
    \begin{enumerate}[(i)]
        \item $F(\B x_*)=0$, and
        \item\label{it:2} we have $\rk H_{\B x_*} \ge n-1$, and
        \item\label{it:3} the signature of $H_{\B x_*}$ satisfies
             \begin{align}\label{eq:bound-signH}
                \sigma(H_{\B x_*}) \ge \lfloor \textstyle{\frac12}(n-2)\rfloor.
             \end{align}
    \end{enumerate}
\end{lem}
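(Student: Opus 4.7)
The plan is to combine the symmetric connected component $U \subseteq \cal V(\SS)$ supplied by Lemma~\ref{l:cc} with the fundamental identity $H_{-\B x} = -H_{\B x}$, which tells us that the Hessian at antipodal points has swapped signatures. Since $F$ is non-singular, $U$ is a smooth $(n-2)$-dimensional submanifold of $\SS^{n-1}$ with $U=-U$, and the degeneracy locus $U \cap \cal H(n-1)$ is a proper algebraic subvariety. I select a base point $\B x_0 \in U \setminus \cal H(n-1)$, say with $\sigma^+(H_{\B x_0}) = p$; then $-\B x_0 \in U$ as well and $\sigma^+(H_{-\B x_0}) = n-p$. Path-connectedness of $U$ provides a continuous path $\gamma:[0,1]\to U$ from $\B x_0$ to $-\B x_0$, and after a generic perturbation I may assume that $\gamma$ meets $\cal H(n-1)$ only transversally at smooth points, i.e.\ off $\cal H(n-2)$.

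The key step is a discrete intermediate value argument applied to the integer-valued step function $t \mapsto \sigma^+(H_{\gamma(t)})$. Continuity of eigenvalues ensures that $\sigma^+$ is locally constant away from crossings with $\cal H(n-1)$, and at each such transversal crossing precisely one eigenvalue passes through zero, forcing a jump of exactly $\pm 1$. Since $\sigma^+$ begins at $p$ and ends at $n-p$, it must attain every integer value between these two; in particular it takes the value $\lfloor n/2 \rfloor$ on some open subinterval of non-singularity. Choosing $\B x_*$ to be any interior point of such a subinterval produces a point with $F(\B x_*) = 0$, $\rk H_{\B x_*} = n$, and
\begin{align*}
\sigma(H_{\B x_*}) = \min\bigl(\lfloor n/2\rfloor,\, \lceil n/2\rceil\bigr) = \lfloor n/2 \rfloor \,\ge\, \lfloor (n-2)/2 \rfloor,
\end{align*}
which exceeds the required bound by one.

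The main obstacle is justifying the general-position perturbation of $\gamma$, which hinges on the codimension estimate $\codim_{\A^n} \cal H(n-2) \ge 3$. This is the standard expected codimension for the locus where the rank of a symmetric matrix drops by two, as $\cal H(n-2)$ is cut out inside $\A^n$ by the $(n-1)\times(n-1)$ minors of the matrix $H_{\B x}$, whose entries depend linearly on $\B x$. Once this determinantal bound is in hand, $\cal H(n-2) \cap U$ has codimension at least two in the $(n-2)$-dimensional manifold $U$, so a generic one-parameter path can be chosen to avoid it entirely. Should the codimension be harder to secure in some degenerate configuration, observing that at a rank-$(n-1)$ crossing the inequality $\min(\sigma^+, \sigma^-) \ge \lfloor (n-1)/2 \rfloor \ge \lfloor (n-2)/2 \rfloor$ still holds would salvage the conclusion without substantive change to the overall scheme.
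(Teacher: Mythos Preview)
Your overall strategy---exploiting the antipodal symmetry $H_{-\B x}=-H_{\B x}$ on the connected component $U$ from Lemma~\ref{l:cc} and running an intermediate-value argument on $\sigma^+(H_{\gamma(t)})$ along a path in $U$---is precisely the one the paper uses. The difference lies in the codimension estimate you invoke, and this is where your argument breaks.

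You assert $\codim_{\A^n}\cal H(n-2)\ge 3$ on the grounds that this is the ``standard expected codimension'' for the corank-$2$ locus of a family of symmetric matrices. But the expected codimension is only achieved when the family is suitably transversal to the determinantal stratification, and the linear family $\B x\mapsto H_{\B x}$ has no reason to be generic in this sense. Indeed, for the Fermat cubic $F=\sum x_i^3$ the Hessian is diagonal with entries proportional to the $x_i$, so $\cal H(n-2)=\bigcup_{i<j}\{x_i=x_j=0\}$ has codimension exactly $2$, not $3$. The bound that \emph{is} available for non-singular cubics is Heath-Brown's \cite[Lemma~2]{HB:83}, which gives $\dim_\C\cal H(n-k)\le n-k$; for $k=2$ this yields only $\codim\cal H(n-2)\ge 2$, hence $\cal H(n-2)\cap U$ may have codimension merely $1$ in $U$, and a one-parameter path cannot in general be perturbed to miss it.

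The paper circumvents this by applying Heath-Brown's bound one step further down: $\dim_\SS\cal H(n-3)\le n-4$ gives codimension $\ge 2$ in $U$, so a generic path avoids $\cal H(n-3)$ and along it at most \emph{two} eigenvalues vanish simultaneously. The function $\sigma^+(H_{\gamma(t)})$ can therefore jump by at most $2$ at each crossing, and since the target interval $[\lfloor(n-2)/2\rfloor,\,n-\lfloor(n-2)/2\rfloor]$ contains at least three integers, it cannot be skipped. This is precisely why the paper obtains $\lfloor(n-2)/2\rfloor$ rather than your $\lfloor n/2\rfloor$: the weaker codimension input forces a wider landing zone. Condition~(ii) is then handled separately, noting via B\'ezout that a generic algebraic path meets $\cal H(n-2)$ in at most finitely many points, which can be excised from the open arc $\gamma'$.

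Your proposed fallback does not repair the gap: the assertion that at a rank-$(n-1)$ crossing one automatically has $\min(\sigma^+,\sigma^-)\ge\lfloor(n-1)/2\rfloor$ is simply false---a rank-$(n-1)$ Hessian can perfectly well have signature $(n-1,0)$.
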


\begin{proof}
    For the proof of this lemma we will work exclusively over $\SS^{n-1}$; observe that this is possible since all sets we will be considering are invariant under scaling.

    It follows from Lemma~\ref{l:cc} that the variety $\cal V(\SS)$ has a connected component $U$ of the expected dimension with the property that $U=-U$. As the dimension of a manifold over $\R$ is always bounded above by its dimension over $\C$, and since $F$ is non-singular,
it follows further from \cite[Lemma~2]{HB:83} that
    \begin{align}\label{dimH}
        \dim_{\SS} \cal H(n-k) \le \dim_\C \cal H(n-k)-1 \le  n-k-1 \qquad (1 \le k \le n-1).
    \end{align}
    Since $\dim_{\SS}U = n-2$, we infer that $\rk H_{\B x} \ge n-1$ for generic $\B x \in U$. If the matrix $H_{\B x}$ already satisfies \eqref{eq:bound-signH} for such a point $\B x$, we may take $\B x_* = \B x$. Otherwise, suppose without loss of generality that the eigenvalues of $H_{\B x}$ are predominantly positive, so that we have $\sigma^{-}(H_{\B x}) < \lfloor \textstyle{\frac12}(n-2) \rfloor$, the other case being analogous. Since $\B x \in U$ and $U$ is symmetric, we know that $-\B x \in U$ as well and we can find a path $\gamma \subseteq U$ connecting the two. The eigenvalues of $H_{\B z}$ vary continuously as $\B z$ moves along $\gamma$, so we need to be able to pick our path $\gamma$ such that the rank of $H_{\B z}$ is bounded below for all $\B z \in\gamma$.
    
    From \eqref{dimH} we discern as before that $\dim_{\SS} U - \dim_{\SS} \cal H(n-3) \ge 2$. As $F$ is non-singular,
we are therefore at liberty to choose a path that avoids the set $\cal H(n-3)$, so that for all $\B z \in \gamma$ no more than two of the eigenvalues of $H_{\B z}$ vanish simultaneously. Since $F$ is of odd degree, we have $\sigma^+(H_{\B x})= \sigma^-(H_{-\B x})$. Hence by travelling along $\gamma$, we move from a point with a predominantly positive signature to a point where the eigenvalues of the Hessian are predominantly negative. Since the interval $\left[\lfloor \textstyle{\frac12}(n-2) \rfloor, n-\lfloor \textstyle{\frac12}(n-2) \rfloor\right]$ always contains at least three integers, it follows that there exists an open set $\gamma' \subseteq \gamma$ with the property that all points $\B x \in \gamma'$ satisfy \eqref{eq:bound-signH}.

    It remains to confirm condition \eqref{it:2}. Whilst without further information we cannot force the intersection $\gamma \cap \cal H(n-2)$ to be empty, it follows from B\'ezout's theorem \cite[Theorem 18.4]{Har:AG} that for generic algebraically defined $\gamma$ it contains at most finitely many points. Hence the set $\gamma' \setminus \cal H(n-2)$ is non-empty, and we conclude that there is a point $\B x_* \in \gamma' \setminus \cal H(n-2)$ which by construction satisfies all the properties postulated in the lemma.
\end{proof}

We can now complete the proof of Theorem~\ref{t:lines}.
Consider the point $\B x_*$ constructed in Lemma~\ref{l:realmainlem}, and set
\begin{align*}
    \cal K = \left\{\B z \in \A^n: \sigma(H_{\B z}) < \lfloor \textstyle{\frac12}(n-2)  \rfloor\right\} \cup \cal H(n-2)
\end{align*}
for the set of points violating condition \eqref{it:2} or \eqref{it:3} of Lemma \ref{l:realmainlem}. From the definition of $\sigma(H_{\B z})$ and the fact that the eigenvalues of $H_{\B z}$ vary continuously with $\B z$, we see that the set $\left\{\B z \in \A^n: \sigma(H_{\B z}) \ge \lfloor \textstyle{\frac12}(n-2)  \rfloor\right\}$ is open in standard topology. Consequently, $\cal K$ is a closed set which is clearly invariant under scaling, and can therefore be interpreted as a submanifold in $\SS^{n-1}$.

 On $\SS^{n-1}$ we have the great-circle distance $d_{\SS}(\B x, \B y)$, which for non-antipodal points $\B x, \B y$ is given by the length of the shorter arc of the geodesic passing through $\B x$ and $\B y$. The natural extension to antipodal points is then given by $d_{\SS}(\B x, -\B x) = \pi$ for all $\B x \in \SS^{n-1}$.
Thus, we may define
\begin{align*}
    \tau =  \min_{\B y \in \cal K(\SS)} d_{\SS}(\B x_*, \B y).
\end{align*}
Since the unit hypersphere is compact and the set $\cal K$ is a closed set as a subset of $\SS^{n-1}$ in standard topology, this distance is well-defined, and it is strictly positive by construction.

It is a well-known fact that smooth cubic hypersurfaces of dimension greater than or equal to $15$ satisfy weak approximation (see for instance \cite[Corollary 1]{Sk:97}). We can therefore find a rational point $\B x_0 \in \cal V \cap \SS^{n-1}(\Q)$ with $d_{\SS}(\B x_*, \B x_0) < \tau/2$. It then follows from the definition of $\tau$ that $\B x_0$ inherits the properties of Lemma~\ref{l:realmainlem} from $\B x_*$, so in particular we have $\rk H_{\B x_0} \ge n-1$ and $\sigma(H_{\B x_0}) \ge\lfloor \textstyle{\frac12}(n-2) \rfloor$.

It remains to solve the system \eqref{eq:sys-1} for $\B y$. This system, as a system of equations in $\B y$, consists of one linear form $\Phi(\B x_0, \B x_0, \B y)$, one quadratic form $Q_{\B x_0}(\B y) = \B y \cdot H_{\B x_0}\B y$ defining a hypersurface $\cal Q$ with $\dim \sing \cal Q \le 0$, and the non-singular cubic form $F(\B y)$. Since $\B x_0$ is rational, all of these polynomials have rational coefficients. The linear form defines a hyperplane $Y$, and it follows that $\dim \sing (\cal V \cap Y) \le 0$ and $\dim \sing (\cal Q \cap Y) \le 1$. We now intersect $\cal Q \cap \cal V \cap Y$ with an additional generic rationally defined hyperplane $Z$, which is chosen such as to not contain $\B x_0$. By Bertini's theorem, this hyperplane can be chosen in such a way that $\cal V \cap Y \cap Z$ is smooth and $\dim \sing (\cal Q \cap Y \cap Z) \le 0$. After a change of variables, we now see that $F|_{Y \cap Z}$ is a non-singular rational cubic form in the variables $y_1, \ldots, y_{n-2}$, and $Q_{\B x_0}|_{Y \cap Z}$ is a rational quadratic form in the variables $y_1, \ldots, y_{n-2}$ of rank at least $n-3$. We now use the following result due to the second author jointly with Browning and Heath-Brown.

\begin{thm}\label{t:bdhb}
    Let $C \in \Q[x_1, x_2, \ldots, x_\nu]$ be a non-singular cubic form in $\nu \ge 29$ variables, and let $Q \in \Q[x_1, \dots, x_\nu]$ be a quadratic form with $\rk Q \ge 28$ that satisfies
    \begin{align}\label{eq:BDHB-realcond}
        \sigma(Q) \ge 2.
    \end{align}
    Then the system 
    \begin{align*}
        C(\B x) = Q(\B x) = 0
    \end{align*}
    has a non-trivial rational solution.
\end{thm}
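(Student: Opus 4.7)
The plan is to establish a Hasse principle for the system $C(\B x) = Q(\B x) = 0$ via a Hardy--Littlewood circle method in two variables. Since $C$ and $Q$ have different degrees, one works with the exponential sum
$$S(\alpha,\beta) = \sum_{\B x \in [-P,P]^\nu \cap \Z^\nu} e\bigl(\alpha C(\B x) + \beta Q(\B x)\bigr)$$
and counts integer solutions via $N(P) = \int_{[0,1]^2} S(\alpha,\beta)\,\D\alpha\,\D\beta$, dissecting $[0,1]^2$ into major arcs (where $\alpha$ and $\beta$ are simultaneously close to rationals of small denominator) and their complement.

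The first task is to verify local solubility at every place. Over $\R$, the condition $\sigma(Q) \ge 2$ renders $Q$ sufficiently indefinite that the zero set $\{Q=0\}$ contains rational linear subspaces of positive dimension; on such a subspace $C$ restricts to a non-singular cubic still in many variables, which, being of odd degree, admits a real zero. Over $\Q_p$, the rank bound $\rk Q \ge 28$ combined with the classical theory of $p$-adic quadratic forms produces a totally isotropic $\Q_p$-subspace of large dimension, and the restriction of $C$ to this subspace is a smooth cubic in enough variables for a result in the spirit of Lewis or Heath-Brown to supply a non-trivial $p$-adic zero.

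For the major arcs, standard manipulations deliver a contribution of the form
$$N_{\mathrm{maj}}(P) = \mathfrak{S}\,\mathfrak{J}\, P^{\nu - 5}\bigl(1 + o(1)\bigr),$$
where the exponent $\nu - 5$ is dictated by the sum of the degrees of the defining equations. The singular series $\mathfrak{S}$ is positive by the $p$-adic solubility above, with a uniform lower bound coming from the smoothness of $C$ and the rank hypothesis on $Q$; the singular integral $\mathfrak{J}$ is positive by real solubility together with the indefiniteness of $Q$, which guarantees a genuine density of real zeros in the neighbourhood of any fixed real point.

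The decisive step, and the one I expect to be the main obstacle, is the minor-arc estimate: one must show that the contribution from the complement of the major arcs is $o(P^{\nu-5})$. The standard route is to apply Weyl differencing to the cubic exponential sum, reducing matters to mean values whose phase depends on the Hessian bilinear form of $C$; the non-singularity of $C$ then controls the dimension of the variety of bad auxiliary vectors. The rank hypothesis on $Q$ is exploited to extract additional cancellation in the $\beta$-direction, either via a van der Corput-type manoeuvre or via a Birch-style linearisation along the quadric. Intermediate regions where only one of $\alpha$ and $\beta$ is well-approximable by rationals require a delicate pruning argument. Balancing these bounds against the main term is precisely what forces the thresholds $\nu \ge 29$ and $\rk Q \ge 28$.
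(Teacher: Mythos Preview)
Your outline and the paper share the same underlying engine --- a two-variable circle method for the system $C=Q=0$ --- but the paper does not carry out the analytic work itself. It simply invokes Theorem~1.3 (via Lemmata~7.2 and~8.1) of Browning--Dietmann--Heath-Brown \cite{BDHB:15} as a black box, so the only content of the proof in the paper is the verification of local solubility with \emph{non-singular} local points. What you call ``the main obstacle'' (the minor-arc bound) is precisely the substance of \cite{BDHB:15}, and your sketch of it (``Weyl differencing \ldots\ van der Corput-type manoeuvre \ldots\ delicate pruning'') is far too schematic to stand on its own; re-deriving those bounds would amount to reproducing a long paper.

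On the local solubility side your ideas are in the right direction but need tightening. Over $\R$, the condition $\sigma(Q)\ge 2$ yields a real isotropic \emph{plane} (a projective line), and restricting $C$ to it gives a \emph{binary} cubic, not a ``non-singular cubic in many variables''; the real zero then comes from odd degree, exactly as in the paper. Over $\Q_p$, Leep's result does give a large totally isotropic subspace (the paper obtains one of affine dimension $\ge 10$ inside a $27$-variable slice), but the restriction of $C$ to it has no reason to be smooth; fortunately Lewis's theorem for $p$-adic cubics requires only $\ge 10$ variables, not smoothness, so your appeal to smoothness there is both unjustified and unnecessary. More substantively, you assert that local solubility alone makes $\mathfrak S$ and $\mathfrak J$ positive. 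This is the genuine gap: positivity of the local densities needs a \emph{smooth} local point on the intersection, not just any local point. The paper handles this by first slicing with generic hyperplanes down to a smooth complete intersection $X^*\subseteq\PP^{26}$, finding local points on $X^*$, and then invoking \cite{katz:99} to lift these to smooth local points on the original $X$. Without this step the conclusion that the singular series and singular integral are positive is not established.
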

\begin{pf}
    This is a rephrasing of Theorem~1.3 of \cite{BDHB:15}, and it follows upon combining Lemmata 7.2 and 8.1 of \cite{BDHB:15}. The only part requiring justification are the local solubility conditions.

    The existence of non-singular $p$-adic solutions of the system $C=Q=0$ follows from \cite[Lemma 9.1]{BDHB:15}, after noting that $\nu \ge 29$ and the non-singularity of $C$ imply the assumptions $\operatorname{ord}_Q(C) \ge 10$ and $h_Q(C) \ge 2$ there. Moreover, by \eqref{eq:BDHB-realcond} the quadratic form $Q$ vanishes on a two-dimensional $\R$-linear space $V$ and is non-singular on $V$. Substituting $V$ into $C$, and using the fact that every real binary cubic form has a non-trivial real zero, we find that $C$ vanishes on a real point $P$ which may be taken to be a non-singular point on $Q=0$.
    One then checks that from this point the proof of \cite[Lemma 9.1]{BDHB:15} goes through over $\R$, producing a non-singular real solution $\B x$ to the system
    $C(\B x)=Q( \B x)=0$.

    The arguments of \cite[Section 8]{BDHB:15} (see also \cite[Section 10]{HB:83}) can now be applied to show that the local factors in Lemma~8.1 of that paper are positive, which establishes the desired conclusion.
\end{pf}

Observe that the conclusion \eqref{eq:bound-signH} of Lemma~\ref{l:realmainlem} ensures that the quadratic form satisfies \eqref{eq:BDHB-realcond}, so the real solubility condition is satisfied and Theorem~\ref{t:lines} follows from Theorem~\ref{t:bdhb} as soon as $n-2 \ge 29$.

\section{The construction of $p$-adic lines}
For the proof of Theorem~\ref{t:Qp}, we will work over suitable quadratic field extensions $\L$ of $\Q_p$. Throughout this section, despite all geometric objects being projective, we prefer to use the term dimension in the affine sense.
We first establish an auxiliary result regarding linear spaces on quadratic hypersurfaces.
\begin{lem}\label{l:quad}
    Let $p$ be a rational prime and $Q \in \Q_p[x_1, \ldots, x_4]$ be a quadratic form. Then either $Q$ vanishes on a two-dimensional linear space over $\Q_p$, or else there exists a quadratic extension $\L$ of $\Q_p$ such that $Q$ vanishes on a two-dimensional $\L$-linear space.
\end{lem}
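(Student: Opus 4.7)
The plan is to treat $Q$ by cases on its rank, and within the non-degenerate rank~$4$ case by its Witt index over $\Q_p$. In each case I will construct an explicit $2$-dimensional isotropic subspace for $Q$, over $\Q_p$ itself when possible, and otherwise over a suitable quadratic extension $\L/\Q_p$ chosen so as to make a small residual form isotropic.

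First, if $\rk Q \le 2$, then the radical of $Q$ already has dimension at least $2$ and consists entirely of zeros of $Q$, so one may take $\L = \Q_p$. If $\rk Q = 3$, I fix a non-zero vector $e$ in the $1$-dimensional radical; after diagonalising the non-degenerate part as $a_1 y_1^2 + a_2 y_2^2 + a_3 y_3^2$, setting $y_3 = 0$ reduces matters to finding a non-trivial zero of the binary form $a_1 y_1^2 + a_2 y_2^2$, which becomes isotropic over $\L = \Q_p(\sqrt{-a_1 a_2})$ (or over $\Q_p$ itself if $-a_1 a_2$ is already a square). Any such non-trivial isotropic vector $v$ is automatically orthogonal to $e$, so $\Span(e,v)$ is the desired subspace.

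Now suppose that $Q$ is non-degenerate. If $Q$ is isotropic over $\Q_p$, the standard Witt decomposition gives $Q \cong \langle 1, -1 \rangle \perp Q_0$ for some binary form $Q_0$. When $Q_0$ is itself isotropic, $Q$ is hyperbolic of Witt index $2$ and a $2$-dimensional isotropic subspace already exists over $\Q_p$; otherwise, writing $Q_0 = b_1 y_1^2 + b_2 y_2^2$ in diagonal form, the extension $\L = \Q_p(\sqrt{-b_1 b_2})$ renders $Q_0$ isotropic, and pairing an isotropic vector $v$ from the $Q_0$-part with an isotropic vector $e$ from the hyperbolic plane $\langle 1,-1\rangle$ yields the required isotropic $2$-plane, since $e$ and $v$ are orthogonal by construction of the decomposition.

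The crux of the argument is the remaining case in which $Q$ is non-degenerate and anisotropic over $\Q_p$. Here I would appeal to the classical fact that, up to similarity, the unique anisotropic $4$-variable form over $\Q_p$ is the reduced norm of the unique quaternion division algebra $D$ over $\Q_p$, together with the observation that every quadratic extension $\L/\Q_p$ embeds into and therefore splits $D$, so that $D \otimes_{\Q_p} \L \cong \Mat_2(\L)$. Under this isomorphism the reduced norm becomes the determinant form on $2 \times 2$ matrices, which is hyperbolic of Witt index $2$ and vanishes on (for instance) the $2$-dimensional $\L$-subspace of matrices with zero second row; transporting this subspace back to the coordinates of $Q$ delivers an isotropic $2$-plane defined over $\L$, completing the argument.
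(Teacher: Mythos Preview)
Your proof is correct and follows the same overall case structure as the paper's: both split into the singular, non-singular isotropic, and non-singular anisotropic cases, and in the first two your argument is essentially identical to the paper's (the paper does not separate rank $\le 2$ from rank $3$ explicitly, but otherwise proceeds by the same diagonalisation and adjoins $\sqrt{-\disc B}$ for a residual binary form $B$).

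The one genuine difference is in the anisotropic case. The paper argues by explicit coordinates: it invokes Serre to write $Q$ in the shape $x_1^2 - a x_2^2 - b x_3^2 + ab x_4^2$, takes $\L = \Q_p(\sqrt{-ab})$, and observes that over $\L$ each of the two binary pieces $x_1^2 + ab x_4^2$ and $-a x_2^2 - b x_3^2$ becomes a difference of squares, so $Q$ splits as a sum of two hyperbolic planes. Your route via the reduced norm of the quaternion division algebra $D$ is more structural and in fact proves slightly more, namely that \emph{every} quadratic extension of $\Q_p$ works, since any such $\L$ splits $D$ and turns the norm form into the determinant on $\Mat_2(\L)$. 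The paper's version is fully elementary and names the extension concretely; yours explains the phenomenon but leans on the splitting behaviour of quaternion algebras over local fields.
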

\begin{proof}
    Let us first discuss the case when $Q$ is singular. Then, after a suitable non-singular linear transformation on the variables, we can assume without loss of generality that $Q$ is of the form
    \begin{align*}
        Q(x_1, \ldots, x_4) = T(x_1, x_2, x_3)
    \end{align*}
    for some ternary quadratic form $T \in \Q_p[x_1, x_2, x_3]$. Let
    \begin{align*}
        B(x_1, x_2) = T(x_1, x_2, 0),
    \end{align*}
    and write $\disc(B)$ for the determinant of the matrix associated with $B$. 
    Set $\L=\Q_p$ if $- \disc(B)$ is a square in $\Q_p$ and $\L=\Q_p(\sqrt{- \disc(B)})$ otherwise. Then $B$ and thus $T$ are isotropic over $\L$, and $Q$ vanishes on the two-dimensional linear space spanned by a zero of $T$ and $(0, 0, 0, 1)$.

    Let us now assume that $Q$ is non-singular. First suppose that $Q$ is isotropic over $\Q_p$. Then it is possible by a suitable non-singular linear transformation of the variables to split off a hyperbolic plane, so that we may assume without loss of generality that $Q$ is of the form
    \begin{align*}
        Q(x_1, \ldots, x_4) = x_1 x_2 + B(x_3, x_4)
    \end{align*}
    for some quadratic form $B \in \Q_p[x_3, x_4]$. As before, set $\L = \Q_p$ if $- \disc(B)$ is a square in $\Q_p$ and $\L=\Q_p(\sqrt{- \disc(B)})$ otherwise. The form $B$ is then isotropic over $\L$, so again $Q$ vanishes on a two-dimensional $\L$-linear space.

    It remains to discuss the case when $Q$ is anisotropic over $\Q_p$. In this setting, it follows from Serre \cite[p.39, Corollary]{S:CArith} that, after a suitable non-singular linear transformation of the variables, we may assume without loss of generality 
    that $Q$ is of the form
    \begin{align*}
        Q(x_1, \ldots, x_4) = x_1^2 - a x_2^2 - b x_3^2+a b x_4^2
    \end{align*}
    for suitable coefficients $a, b \in \Q_p$ having the property that $-ab$ is not a square in $\Q_p$. Let $\L=\Q_p(u)$ where $u^2=-ab$. Then both forms
    \begin{align*}
        x_1^2+a b x_4^2=x_1^2-(u x_4)^2\qquad \text{ and }\qquad -a x_2^2-b x_3^2=-\frac{1}{b}((b x_3)^2-(u x_2)^2)
    \end{align*}
    are isotropic over $\L$. We conclude that again, $Q$ vanishes on a two-dimensional linear space defined over $\L$.
\end{proof}

With the help of Lemma~\ref{l:quad} we can find a large linear space on the hypersurface defined by the quadratic form $Q$.

\begin{lem}\label{l:bigquad}
	Let $t \in \N$ with $t \ge 1$, let $p$ be a rational prime, and let $Q(x_1, \ldots, x_s) \in \Q_p[x_1, \ldots, x_s]$ be a quadratic form with $s \ge 2t$. Then either $Q$ vanishes on a $t$-dimensional linear space over $\Q_p$, or there exists a quadratic extension $\L$ of $\Q_p$ such that $Q$ vanishes on a $t$-dimensional $\L$-linear space.
\end{lem}

\begin{proof}
	By setting $x_{2t+1} = \ldots = x_{s}=0$ in case of $s>2t$, we can reduce to the case $s=2t$, which we assume from now on. We will prove the result by induction on $t$. The conclusion is trivial when $t=1$, and the case $t=2$ is just Lemma \ref{l:quad}. 
	
	Now suppose that the result has already been established for some value $t \ge 2$, and consider a quadratic form in $s = 2t+2$ variables.  	
	Let us first discuss the case that $Q$ is non-singular. Since $2t +2 \ge 5$, the quadratic form $Q$ is isotropic over $\Q_p$. We can thus split off a hyperbolic plane. Then after a suitable non-singular linear change of variables, we can without loss of generality assume that $Q$ is of the form
	\begin{align*}
  		Q(x_1, \ldots, x_{2t+2}) =  Q_1(x_1, \ldots, x_{2t}) +  x_{2t+1} x_{2t+2}
	\end{align*}
	for some quadratic form $Q_1 \in \Q_p[x_{1}, \ldots, x_{2t}]$. By our inductive assumption, $Q_1$ vanishes on a $t$-dimensional linear space over either $\Q_p$ or a suitable quadratic extension $\L$ of $\Q_p$, whence $Q$ vanishes on a $(t+1)$-dimensional linear space over $\Q_p$ or $\L$, respectively. Hence the conclusion holds true in case of non-singular $Q$. 
	
	Now suppose that $Q$ is singular. Then after a suitable non-singular linear change of variables, we can without loss of generality assume that $Q$ is of the form
	\begin{align*}
  		Q(x_1, \ldots, x_{2t+2})=T(x_1, \ldots, x_{2t+1})
	\end{align*}
	for some quadratic form $T(x_1, \ldots, x_{2t+1}) \in \Q_p[x_1, \ldots, x_{2t+1}]$. By the inductive assumption, the quadratic form $T$ vanishes on a $t$-dimensional linear space $A$ over either $\Q_p$ itself or a suitable quadratic extension. Clearly, $A$ is orthogonal to the line spanned by $\B e_{2t+2} = (0, \ldots, 0,1) \in \Q_p^{2t+2}$, and we conclude that $Q$ vanishes on the $(t+1)$-dimensional linear space spanned by $A$ and $\B e_{2t+2}$. Hence the conclusion also holds true for $t+1$ in place of $t$. This finishes the proof.
\end{proof}

Since $n \ge 10$, we infer from \cite{L:52} that $F$ has a non-trivial $p$-adic zero $\B x_0$. In order to find a projective $\Q_p$-line on $\cal V$, using the notation from the introduction, we need to find a non-trivial solution $\B y \in \Q_p^n$ to the system \eqref{eq:sys-1} with the property that $\B x_0$ and $\B y$ are linearly independent.
Just like in our previous arguments, we choose a linear equation $M(\B y)=0$ not satisfied by $\B x_0$ in order to guarantee that our solution $\B y$ is distinct from $\B x_0$. It is therefore enough to solve the augmented system
\begin{align*}
    M(\B y)=\Phi(\B x_0, \B x_0, \B y)=\Phi(\B x_0, \B y, \B y)=F(\B y)=0.
\end{align*}
As $n \ge 22$, the two linear equations are satisfied on a $\Q_p$-linear space $V$ with affine dimension $\dim(V) = n-2 \ge 20$. By Lemma~\ref{l:bigquad} there exists a quadratic extension $\L$ of $\Q_p$, or possibly $\Q_p$ itself, such that $\Phi(\B x_0, \B y, \B y)$ vanishes on an $\L$-linear subspace $W \subseteq V$ of $\dim(W) \ge 10$. Hence by Lewis' classical result \cite[Theorem]{L:52} there exists $\B y \in W \setminus \{\bm 0\}$ such that $F(\B y)=0$. Then $\B x_0$ and $\B y$ span a line over $\L$, and by \cite[Lemma~2.2]{W:97linspcub}, this line can be pulled back to a line over $\Q_p$ itself. This finishes the proof of Theorem~\ref{t:Qp}\eqref{it:allp}.

We now turn to the proof of part \eqref{it:largep}. As before, by \cite{L:52} there exists $\B x_0 \in \Q_p^n \setminus \{\bm 0\}$ such that $F(\B x_0) = 0$, so we need to find a solution $\B y$ of the system of equations 
\begin{align*}
		\Phi(\B x_0, \B x_0, \B y) = \Phi(\B x_0, \B y, \B y) = F(\B y)= 0
\end{align*}
that is $\Q_p$-linearly independent of $\B x_0$.
The first equation $\Phi(\B x_0, \B x_0, \B y)=0$ defines a hyperplane $L$. As above, we also choose a hyperplane $M$ not containing $\B x_0$. It is then sufficient to solve the system 
\begin{align}\label{eq:f-q}
	f(\B y)=q(\B y)=0
\end{align}
which is obtained by restricting the forms $F(\B y)$ and $\Phi(\B x_0, \B y, \B y)$ to the hyperplane section $L \cap M$. Thus $f$ is a cubic and $q$ a quadratic form, both defined over $\Q_p$, in at least $14$ variables. We now choose an unramified quadratic extension $\L$ of $\Q_p$. As $p \ge 19$, the residue class field of $\L$ over $\Q_p$ has cardinality $p^2 > 293$, and the existence of a non-trivial solution $\B y$ of \eqref{eq:f-q} over $\L$ then follows from work of Zahid \cite[Theorem~1.1]{Zahid:11}. Thus again, $\B x_0$ and $\B y$ span a line over $\L$, which can be pulled back to $\Q_p$ by \cite[Lemma~2.2]{W:97linspcub}. This completes the proof of Theorem~\ref{t:Qp}\eqref{it:largep}.

\section{A smooth cubic hypersurface without rational lines}
Cubic hypersurfaces of projective dimension $9$ or lower do not necessarily contain $p$-adic lines for all primes $p$. In order to construct a counterexample, we build on recent work by Debarre, Laface and Roulleau \cite{DLR:17}. These authors investigated the existence or otherwise of lines on cubic hypersurfaces over finite fields. Among other things, they show that any cubic hypersurface of dimension $5$ or greater contains a line over $\F_q$ for all $q$ (Theorem 6.2 in their paper), and every smooth cubic fourfold over $\F_q$ contains a line when $q \not\in \{3,4\}$ (Theorem 5.2 for $q \ge 5$ and Corollary 5.4 for $q=2$). The cases when $q \in \{3, 4\}$ are not resolved, but the authors conjecture that all smooth cubic fourfolds over finite fields contain a line.

This leaves us to consider cubic threefolds. Here, we have a different picture: Smooth cubic threefolds over $\F_q$ contain lines whenever $q \ge 11$ (see \cite[Theorem~4.5]{DLR:17}), but while the cases $q \in \{7,8,9\}$ are open, over the fields $\F_2$, $\F_3$, $\F_4$ and $\F_5$ we now have explicit examples of non-singular cubic forms in $5$ variables that define hypersurfaces which do not contain any lines over $\F_q$.
\begin{lem}\label{l:counterex-5v}
    Let
    \begin{align*}
        H_2(x_1, \ldots, x_5) &=x_1^3+x_2^3+x_3^3+x_1^2 x_2 + x_2^2 x_3 + x_3^2 x_1 + x_1 x_2 x_3 + x_1 x_4^2 \\
        	& \quad  + x_1^2 x_4 + x_2 x_5^2 + x_2^2 x_5 + x_4^2 x_5,\\
        H_3(x_1, \ldots, x_5) &=2 x_1^3 + 2 x_2^3 + x_1 x_3^2 + x_2^2 x_4 + 2 x_3^2 x_4 + x_1^2 x_5\\
            &\quad +x_2 x_3 x_5 + 2 x_1 x_4 x_5 + 2 x_2 x_4 x_5 + 2x_4^2 x_5 + 2 x_4 x_5^2 + x_5^3,\\
        H_5(x_1, \ldots, x_5) &= x_1^3 + 2 x_2^3 + x_2^2 x_3 + 3 x_1 x_3^2 + x_1^2 x_4 + x_1 x_2 x_4 +  x_1 x_3 x_4\\
            &\quad + 3 x_2 x_3 x_4 + 4 x_3^2x_4 + x_2 x_4^2 + 4 x_3 x_4^2 + 3 x_2^2 x_5 +  x_1 x_3 x_5\\
            &\quad + 3 x_2 x_3 x_5 + 3 x_1 x_4 x_5 + 3 x_4^2 x_5 + x_2 x_5^2 + 3 x_5^3.
\end{align*}
    Then for $p \in \{2,3,5\}$, the hypersurface $H_p(x_1, \ldots, x_5)=0$ is smooth over $\C$ and contains no line over $\F_p$.
\end{lem}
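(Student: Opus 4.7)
The proof is essentially a finite verification that reduces to a routine computer computation in two independent parts. I sketch the approach.

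\emph{Smoothness over $\C$.} For each $p\in\{2,3,5\}$ I would form the Jacobian ideal $J_p=(\partial H_p/\partial x_1,\ldots,\partial H_p/\partial x_5)\subseteq\Q[x_1,\ldots,x_5]$ and verify that its vanishing locus in $\A^5(\C)$ is $\{\bm 0\}$; equivalently, a Gröbner basis computation exhibits a power of each variable $x_i$ inside $J_p$. Because $H_p$ has integer coefficients and smoothness over $\C$ coincides with smoothness over $\overline{\Q}$, this is a purely symbolic computation requiring no explicit embedding $\Q\hookrightarrow\C$. One does have to be a little careful: the polynomials $H_3$ and $H_5$ have coefficients expressed as integers rather than as elements of $\F_3$ or $\F_5$, so the smoothness check over $\C$ is independent of the finite-field check below.

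\emph{Absence of $\F_p$-lines.} Write $\Phi_p$ for the symmetric trilinear polar form of $H_p$. A line through two linearly independent points $\B x,\B y\in\F_p^5$ lies on the hypersurface $H_p=0$ exactly when
\begin{align*}
    H_p(s\B x+t\B y)=s^3 H_p(\B x)+s^2 t\,\Phi_p(\B x,\B x,\B y)+st^2\,\Phi_p(\B x,\B y,\B y)+t^3 H_p(\B y)
\end{align*}
vanishes identically in $(s,t)$, i.e., all four coefficients equal zero. Since the Grassmannian of lines in $\PP^4_{\F_p}$ contains
\begin{align*}
    \frac{(p^5-1)(p^5-p)}{(p^2-1)(p^2-p)}\in\{155,\,1210,\,20\,306\}
\end{align*}
points for $p\in\{2,3,5\}$ respectively, a brute-force enumeration is feasible. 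In practice I would iterate over the finitely many $\F_p$-points $\B x$ of the cubic, observe that for fixed $\B x$ the conditions $\Phi_p(\B x,\B x,\B y)=0$ and $\Phi_p(\B x,\B y,\B y)=0$ are linear and quadratic in $\B y$ respectively, solve over $\F_p$, and check in each case that no solution $\B y\notin\langle\B x\rangle_{\F_p}$ satisfies $H_p(\B y)=0$.

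\emph{Main difficulty.} There is no substantive mathematical obstacle; the whole content lies in producing polynomials with the stated properties in the first place. The production step is the hard one: for $p=2$ one can plausibly exhibit $H_2$ by hand from the classification of cubic threefolds over $\F_2$, but for $p=3$ and especially $p=5$ the search space is sufficiently large that some sort of targeted computer search (restricting to forms with small Fano scheme, exploiting group actions to prune orbits, etc.) is needed. Once the candidate $H_p$ has been pinned down, the two verifications above are mechanical and can be cross-checked in independent computer algebra systems, such as \textsc{Magma} and \textsc{SageMath}, as a safeguard against implementation errors.
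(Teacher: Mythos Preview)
Your approach is sound; the paper itself simply cites \S4.5.4 of Debarre, Laface and Roulleau \cite{DLR:17}, where these forms were produced and checked, so your sketch is in effect a description of what such a verification looks like and is more self-contained than what the paper provides.

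One small technical wrinkle is worth cleaning up. The symmetric trilinear polar form $\Phi_p$ with $\Phi_p(\B x,\B x,\B x)=H_p(\B x)$ requires division by $6$ and hence is not well defined over $\F_2$ or $\F_3$; moreover, even over $\Q$ the binomial expansion reads
\begin{align*}
    H_p(s\B x+t\B y)=s^3 H_p(\B x)+3s^2t\,\Phi_p(\B x,\B x,\B y)+3st^2\,\Phi_p(\B x,\B y,\B y)+t^3 H_p(\B y),
\end{align*}
so your displayed expansion is missing the factors of $3$. This does not affect the brute-force enumeration at all: one simply expands $H_p(s\B x+t\B y)$ directly in $\F_p[s,t]$ and checks whether the four coefficients of $s^3,s^2t,st^2,t^3$ all vanish, bypassing the polar form entirely. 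With that adjustment your verification goes through unchanged.
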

\begin{pf}
    These examples are taken from \S 4.5.4 in \cite{DLR:17}. 
\end{pf}
We also need a supply of non-singular ternary forms that do not admit any non-trivial zero over $\F_p$.
\begin{lem}\label{l:counterex-3v}
    Let
    \begin{align*}
        G_2(x, y, z) &=x^3 + y^3 + z^3 + x y^2 + x z^2 + y z^2 + x y z,\\
        G_3(x, y, z) &=x^3 + y^3 + z^3 + 2 x y^2 + 2 x^2 z + 2 y z^2 + x y z,\\
        G_5(x, y, z) &=x^3 + y^3 + z^3 + x y^2 + x^2 z + y z^2.
    \end{align*}
    These polynomials are all non-singular over $\C$. Furthermore, for each $p \in \{2,3,5\}$, the equation $G_p(x,y,z)=0$ has only the trivial solution $x=y=z=0$ over $\F_p$.
\end{lem}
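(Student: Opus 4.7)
The lemma splits cleanly into two independent finite verifications, neither of which requires any conceptual machinery beyond direct computation. The plan is to dispatch each by a straightforward enumeration.

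For the claim that $G_p$ has only the trivial $\F_p$-zero, the natural approach is brute enumeration. Since $G_p$ is homogeneous of degree three, it suffices to evaluate $G_p$ on each of the finitely many triples $(x,y,z) \in \F_p^3$---that is, $8$ triples for $p=2$, $27$ for $p=3$, and $125$ for $p=5$---and confirm that the origin is the unique zero. Equivalently, one may reduce to the $p^2 + p + 1 \in \{7, 13, 31\}$ projective representatives in $\PP^2(\F_p)$. The verification is routine arithmetic; one simply records the outcome.

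For the non-singularity claim over $\C$, the plan is to invoke the Jacobian criterion and show that the three partial derivatives
\begin{align*}
\frac{\partial G_p}{\partial x}, \qquad \frac{\partial G_p}{\partial y}, \qquad \frac{\partial G_p}{\partial z}
\end{align*}
have no nontrivial common zero in $\C^3$. The cleanest route is to compute the discriminant $\disc(G_p)$---the universal invariant in the coefficients of a ternary cubic that vanishes precisely on the locus of singular plane cubics---and verify it is a nonzero integer. A convenient shortcut avoids the explicit discriminant formula altogether: since $\disc(G_p) \in \Z$ reduces mod~$\ell$ to the discriminant of the reduction, it is enough to exhibit a single auxiliary prime $\ell \notin \{2,3,5\}$ (for instance $\ell = 7$) for which the Jacobian ideal of $G_p$ defines an empty subscheme of $\PP^2(\overline{\F_\ell})$. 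Non-singularity over $\F_\ell$ implies $\disc(G_p) \not\equiv 0 \pmod\ell$, hence $\disc(G_p) \neq 0$ in $\Z$, hence smoothness over $\C$.

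The only obstacle is bookkeeping: the $125$ evaluations for $G_5$ and the Jacobian computation for each of the three forms are tedious but mechanical, and I would carry them out using a computer algebra system and report the outcome. Since the forms $G_p$ are closely analogous in spirit to the forms $H_p$ in Lemma~\ref{l:counterex-5v}, whose construction and verification are treated in \S 4.5.4 of \cite{DLR:17}, the same computational framework applies verbatim.
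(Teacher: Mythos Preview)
Your proposal is correct and matches the paper's own proof, which is simply the one-line assertion that ``this can be confirmed by a brute force calculation.'' You have merely spelled out in more detail what that calculation entails---enumeration over $\F_p^3$ for the isotropy claim and a Jacobian/discriminant check for smoothness---which is exactly in the spirit of the paper's approach.
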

\begin{pf}
    This can be confirmed by a brute force calculation.
\end{pf}

We are now in a position to prove Theorem \ref{t:lower}. For $p \in \{2,3,5\}$, set
\begin{align} \label{eq:counterex-11v}
    F_p(x_1, \ldots, x_{11}) = H_p(x_1, \ldots, x_5) +   pG_p(x_6, x_7, x_8)+p^2G_p(x_9, x_{10}, x_{11}).
\end{align}
Suppose that the hypersurface given by $F_p=0$ contains a line $\ell$ over $\Q_p$.
Then, writing $\Z_p$ for the ring of $p$-adic integers, we can find two generators $\B x, \B y \in \Z_p^{11}$ of $\ell$, and it follows from Lemma 7 in \cite{Diet:08} that no generality is lost by assuming that $\B x$ and $\B y$ are $\F_p$-linearly independent. For any vector $\B a \in \Z_p^{11}$ write $\B a = (\B a_1, \B a_2, \B a_3)$, where $\B a_1 \in \Z_p^5$ and $\B a_2, \B a_3 \in \Z_p^3$. By \eqref{eq:counterex-11v}, as $F_p(\alpha \B x + \beta \B y)=0$ for all $\alpha, \beta \in \Z_p$, the cubic form $H_p$ vanishes over $\F_p$ on the line defined by the reductions of $\B x_1$ and $\B y_1$ modulo $p$. By Lemma \ref{l:counterex-5v}, this is only possible if $\B x_1$ and $\B y_1$ are linearly dependent modulo $p$. Hence there exists a linear combination $\B z = \lambda \B x + \mu \B y$ with $\lambda, \mu \in \Z_p$, not both divisible by $p$, such that all components of $\B z_1$ are divisible by $p$. Since $F_p(\alpha \B x + \beta \B y) = 0$ in $\Z_p$ for all $\alpha, \beta \in \Z_p$, in particular we must have $F_p(\B z)=0$. Our earlier conclusion that $\B z_1 \equiv \bm 0 \mmod p$ implies that $H_p(\B z_1) \equiv 0 \mmod{p^3}$, so reducing $F_p(\B z) \equiv 0 \mmod{p^2}$ forces $G_p(\B z_2)=0$ over $\F_p$. Hence by Lemma \ref{l:counterex-3v}, we discern that all components of $\B z_2$ are divisible by $p$, and thus $G_p(\B z_2)\equiv0 \mmod{p^3}$. In the same way, reducing modulo $p^3$ then shows that all components of $\B z_3$ are divisible by $p$. It follows that $\B z= \bm 0$ in $\F_p$, contradicting the fact that $\B x$ and $\B y$ are $\F_p$-linearly independent. Finally, the non-singularity of $F_p$ is inherited from the fact that both $H_p$ and $G_p$ are non-singular for all $p \in \{2,3,5\}$.

\appendix
\section{Mordell--Weil constructions}
We briefly outline a Mordell--Weil type theorem for smooth cubic hypersurfaces of high dimension. The secant-tangent construction for generating rational points on elliptic curves is well-known, and a classical result by Mordell \cite{Mordell} states that all rational points on an elliptic curve can be generated in this way from a finite set of generators. Motivated by this result, Manin asked \cite[p.~7]{Manin:cub} whether an analogous statement continues to hold for higher-dimensional cubic hypersurfaces. This question has very recently been answered in the affirmative by Papanikolopoulos and Siksek \cite{PapaSik}, provided that the hypersurface is of dimension at least 48. In their argument, they show that a point $\B x$ on the hypersurface generates another point $\B y$ if the two equations $F(\B y) = 0$ and $\B y \cdot \nabla F(\B x) = 0$ are satisfied. One of the main ingredients of their proof is therefore a result by the second author with Browning and Heath-Brown on the existence of rational points on the intersection of general cubic and quadratic equations \cite[Theorem~1.4]{BDHB:15}. However, if one applies Theorem~1.3 of the same paper instead, in a version akin to our Theorem~\ref{t:bdhb} above, one can obtain a stronger result.
\begin{thm}\label{t:PS}
    Let $\cal V \subseteq \PP^{n+1}$ be a smooth cubic hypersurface of dimension $n \ge 29$. Then there exists a point $A \in \cal V(\Q)$ such that $\Span A = \cal V(\Q)$.
\end{thm}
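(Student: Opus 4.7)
I would follow the strategy of Papanikolopoulos--Siksek \cite{PapaSik}, with the key modification of substituting Theorem~\ref{t:bdhb} for their use of \cite[Theorem~1.4]{BDHB:15}. First, I choose a rational base point $A \in \cal V(\Q)$, whose existence is guaranteed by weak approximation for smooth cubic hypersurfaces in sufficiently many variables. Given an arbitrary $B \in \cal V(\Q)$, the plan is to construct a two-step tangent chain from $A$ to $B$, by which I mean a rational intermediate point $Y \in \cal V(\Q)$ satisfying
\[
    F(Y) = \Phi(A, A, Y) = \Phi(Y, Y, B) = 0.
\]
The second equation, which is linear in $Y$, forces the line through $A$ and $Y$ to be tangent to $\cal V$ at $A$, so that $Y$ is the third intersection of this tangent line with $\cal V$ and hence lies in $\Span A$. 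The third equation, which is quadratic in $Y$, analogously forces $B$ to be the third intersection of the tangent line to $\cal V$ at $Y$, so that $B \in \Span Y \subseteq \Span A$.

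The task is thus reduced to locating a rational point of $\cal V$ on the intersection of the hyperplane $L = \{Y : \Phi(A, A, Y) = 0\}$ with the quadric $\{Y : \Phi(Y, Y, B) = 0\}$. Intersecting additionally with a generic rational hyperplane $Z$ chosen via Bertini's theorem so as to preserve non-singularity, I obtain a system of one non-singular cubic and one quadratic equation in $n$ variables, to which Theorem~\ref{t:bdhb} may be applied provided its hypotheses are met. Since the matrix of $\Phi(\cdot, \cdot, B)$ coincides, up to a scalar, with the Hessian of $F$ at $B$, the non-singularity of $F$ ensures that this matrix has full rank $n+2$ away from the Hessian hypersurface of $F$, so that the restricted quadratic has rank at least $n \ge 29$ for generic $B$, comfortably meeting the rank condition required by Theorem~\ref{t:bdhb}.

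The main obstacle is to secure the signature condition $\sigma \ge 2$ for the restricted quadratic, which is a genuine real-analytic constraint that need not hold for every $B \in \cal V(\Q)$. To overcome this, I would argue, in the spirit of the proofs of Lemmata~\ref{l:cc} and~\ref{l:realmainlem}, that the rational points $B$ for which the signature condition fails are contained in a proper real-analytic subset of $\cal V(\R)$. If a particular $B$ fails the condition, I would replace it by the third intersection $C$ of the line through $A$ and $B$ with $\cal V$, which is again a rational point on $\cal V$; since $B \in \Span\{A, C\}$, it suffices to have $C \in \Span A$, and one has genuine flexibility in choosing between the two candidates $B$ and $C$. Using weak approximation to position $A$ appropriately on the symmetric connected component produced by Lemma~\ref{l:cc}, together with the identity $\sigma^+(H_{\B x}) = \sigma^-(H_{-\B x})$ and the continuity of eigenvalues, one should be able to guarantee that for every $B$ at least one of $B$ or $C$ yields a quadratic of the required signature after restriction. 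Making this topological argument fully precise is the technical heart of the proof.
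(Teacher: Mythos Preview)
Your overall framework matches the paper's: follow Papanikolopoulos--Siksek, but substitute the sharper \cite[Theorem~1.3]{BDHB:15} (in the guise of Theorem~\ref{t:bdhb}) for their use of Theorem~1.4. The system you write down for the intermediate point is exactly the variety $Y_{\B y,\B z}$ appearing in the appendix, with your $(A,Y,B)$ corresponding to the paper's $(\B y,\B x,\B z)$.

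The genuine gap is in your handling of the signature condition. First, your claim that the points $B$ for which $\sigma(H_B|_{L\cap Z})<2$ lie in a proper real-analytic subset of $\cal V(\R)$ is incorrect: signature is locally constant on the open locus where $H_B$ has full rank, so the bad set is open, not thin; this is precisely why Lemma~\ref{l:realmainlem} has to work so hard to find even a single point with balanced signature. Second, your proposed repair --- replacing a bad $B$ by the third intersection $C$ of the line $AB$ with $\cal V$ and hoping that for a suitably positioned $A$ at least one of $H_B$, $H_C$ always has $\sigma\ge 2$ --- is not substantiated; the identity $\sigma^+(H_{\B x})=\sigma^-(H_{-\B x})$ does not apply since $C\neq -B$ in general, and there is no evident mechanism forcing the map $B\mapsto C(A,B)$ to carry the open bad-signature region into its complement. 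You yourself flag this as ``the technical heart'' without carrying it out, and I do not see a way to make it work. There is also a secondary gap: you only verify the rank hypothesis for \emph{generic} $B$, but you need to reach every rational $B$.

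The paper sidesteps both issues. The key observation is that in the proof of Theorem~\ref{t:bdhb} the condition $\sigma(Q)\ge 2$ is used \emph{only} to manufacture a non-singular real point on the system; if one already has such a point, the signature hypothesis is superfluous and \cite[Theorem~1.3]{BDHB:15} applies directly. The paper therefore does not attempt to control $\sigma(H_{\B z})$ at all. Instead, Lemma~\ref{l:PS} invokes \cite[Lemma~3.2]{PapaSik} to produce, for $\B z$ ranging over a non-empty real open set $U\subseteq\cal V(\R)$, a smooth real point on $Y_{\B y,\B z}$, and then shrinks $U$ (using \cite[Lemma~2]{HB:83}) so that $\rk H_{\B z}\ge n$ for all $\B z\in U$. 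This yields $Y_{\B y,\B z}(\Q)\neq\emptyset$ for every rational $\B z\in U$. Crucially, the paper does \emph{not} try to run the two-step tangent chain for every target $B$ directly; it only establishes $\B z\in\Span\B y$ for $\B z$ in an open patch, and then defers to the remaining machinery of \cite{PapaSik} to propagate from that patch to all of $\cal V(\Q)$. Both of these moves --- bypassing the signature via a real smooth point, and settling for an open set of targets rather than all targets --- are what make the argument go through at $n\ge 29$.
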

Observe that, just like in the work of Papanikolopoulos and Siksek whose methods our result is based on, the finite set generating $\cal V(\Q)$ is in fact a singleton. In the spirit of Manin's question, it would be interesting to know what the largest integer $n$ is such that there exists a smooth cubic hypersurface of dimension $n$ whose rational points do not lie in the span of a single point.

For the outline of the proof and all references, we refer to the work of Papanikolopoulos and Siksek \cite{PapaSik}.
In their work, the authors encounter the problem of showing that the variety $Y_{\B y, \B z} \subseteq \PP^{n+1}$ defined by the equations
\begin{align*}
    F(\B x) = 0, \qquad \Phi(\B x, \B x, \B z)=0, \qquad \Phi(\B x, \B y, \B y) = 0
\end{align*}
contains non-trivial rational points. The authors achieve this by invoking Theorem~1.4 of \cite{BDHB:15}. However, if the rank of the quadratic form can be controlled, a more efficient means to do this is to instead apply Theorem~1.3 of the same paper. We give an outline of the (very minor) changes in the argument that have to be effected in order to proceed in this manner.\smallskip

The following is a modification of Lemma~3.3 in \cite{PapaSik}.
\begin{lem}\label{l:PS}
    Suppose that $n \ge 29$. Let $\B y \in \cal V(\Q)$. Suppose that $\B z' \in \cal V(\R)$ is such that $Y_{\B y, \B z'}$ has a smooth real point $\B x'$. Then there is a non-empty open subset $U \subseteq \cal V(\R)$ such that when $\B z \in U \cap \cal V(\Q)$, then $Y_{\B y, \B z}(\Q) \neq \emptyset$.
\end{lem}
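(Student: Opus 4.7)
The plan is to reduce $Y_{\B y, \B z}$, after a suitable hyperplane section, to a system of the form treated in Theorem~\ref{t:bdhb}, and then to extract the desired open neighbourhood $U$ from the openness of the relevant hypotheses in $\B z$. Setting $Q_{\B z}(\B x) = \Phi(\B x, \B x, \B z)$ and letting $L_{\B y}$ denote the hyperplane $\Phi(\B x, \B y, \B y) = 0$, the variety $Y_{\B y, \B z}$ is precisely $\cal V \cap L_{\B y} \cap \{Q_{\B z} = 0\}$. By Bertini one may select a rational hyperplane $M$ such that $\cal V \cap L_{\B y} \cap M$ is non-singular, and such that $Q_{\B z'}|_{L_{\B y} \cap M}$ attains its maximal possible rank. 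After a rational change of variables this produces a non-singular cubic form $\widetilde F \in \Q[y_1, \ldots, y_n]$, with $n \ge 29$, together with a quadratic form $\widetilde Q_{\B z}$ whose coefficients depend linearly on $\B z$.

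The next step is to verify the hypotheses of Theorem~\ref{t:bdhb} at $\B z = \B z'$. A generic-position calculation along the lines of that in Section~\ref{s:line} gives the rank bound $\rk \widetilde Q_{\B z'} \ge 28$. For the signature condition $\sigma(\widetilde Q_{\B z'}) \ge 2$, the idea is to exploit the hypothesis that $Y_{\B y, \B z'}$ contains a smooth real point $\B x'$: by choosing $M$ to pass close to $\B x'$, one inherits a smooth real point on the restricted intersection $\{\widetilde F = \widetilde Q_{\B z'} = 0\}$. Since in the proof of Theorem~\ref{t:bdhb} the signature hypothesis is invoked only in order to guarantee such a real point, the circle-method analysis of Lemmata~7.2 and 8.1 of \cite{BDHB:15} applies equally well when a smooth real point is supplied directly by hypothesis.

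The three conditions thus verified at $\B z'$---non-singularity of $\cal V \cap L_{\B y} \cap M$, the rank bound on $\widetilde Q_{\B z}$, and the existence of a smooth real point on $Y_{\B y, \B z}$---are each open under small real perturbations of $\B z$ in $\cal V(\R)$. Consequently there is a non-empty real open subset $U \subseteq \cal V(\R)$ containing $\B z'$ on which all of them persist, and for any rational $\B z \in U$ the (mildly modified) Theorem~\ref{t:bdhb} furnishes a non-trivial rational point on $Y_{\B y, \B z}$, as required.

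The main obstacle I foresee lies in the interplay between the rationality requirement on $M$ and the need for $M$ to be sufficiently close to the real point $\B x'$ for real solubility to survive the restriction. I expect this to be tractable via a two-step perturbation: first choose a real hyperplane through (or near) $\B x'$ enjoying the required generic properties, and then approximate it by a nearby rational hyperplane, using weak approximation for $\cal V$ together with the fact that the conditions we need are Zariski-open and hence stable under small perturbations.
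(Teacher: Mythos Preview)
Your overall strategy coincides with the paper's: slice $Y_{\B y,\B z}$ by an auxiliary rational hyperplane, feed the resulting cubic--quadric system into Theorem~\ref{t:bdhb} (with the real point supplied directly rather than via the signature), and exploit openness in $\B z$. However, there is a genuine gap in your handling of the rank condition.

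You write that ``a generic-position calculation along the lines of that in Section~\ref{s:line} gives the rank bound $\rk \widetilde Q_{\B z'} \ge 28$'', and then argue that this condition is open in $\B z$. But $\B z'$ is a \emph{fixed} point given in the hypothesis, and nothing prevents $H_{\B z'}$ from having small rank; the arguments of Section~\ref{s:line} give rank bounds only for \emph{generic} points, not for a prescribed one. Restricting to $L_{\B y}\cap M$ can only lower the rank, so no choice of $M$ rescues this. Consequently the three conditions are not all ``verified at $\B z'$'', and openness alone does not yield the neighbourhood $U$.

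The paper repairs this in the opposite order. First, Lemma~3.2 of \cite{PapaSik} furnishes an open neighbourhood $U_0$ of $\B z'$ on which $Y_{\B y,\B z}$ retains a smooth real point. Then, since by \cite[Lemma~2]{HB:83} the locus of $\B z\in\cal V$ with $\rk H_{\B z}$ small is Zariski-closed of lower dimension, its complement is Zariski-open and dense and therefore meets $U_0$ in a non-empty Euclidean-open set $U$. For $\B z\in U$ the quadric has singular locus of dimension at most $0$, and only then does one slice by a generic rational hyperplane and invoke Theorem~\ref{t:bdhb}. Your argument becomes correct once you swap ``verify the rank at $\B z'$ and use openness'' for ``the high-rank locus is open and dense, hence meets the real-solubility neighbourhood''.

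As a minor point, your identified ``main obstacle'' is less serious than you suggest: since $Y_{\B y,\B z}$ has a smooth real point, its real locus there has dimension $\ge 27$, so a \emph{generic} rational hyperplane $M$ already meets it in smooth real points; no delicate approximation of $M$ near $\B x'$ is needed.
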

\begin{pf}
    It follows from Lemma~3.2 of \cite{PapaSik} that if the variety $Y_{\B y, \B z'}$ has a smooth real point $\B x'$, then for any open neighbourhood $V$ of $\B x'$ there exists an open neighbourhood $U_0$ of $\B z'$ such that for every $\B z \in U_0$ the variety $Y_{\B y, \B z}$ has a smooth real point $\B x \in V$. From Lemma~2 of \cite{HB:83} we see that the set of points $\B z \in \cal V$ having $\rk H_{\B z} \le n-1$ is a lower-dimensional submanifold of $\cal V$; we can therefore find an open subset $U \subseteq U_0$ such that for all $\B z \in U$ the variety $Y_{\B y, \B z}$ has a smooth real point $\B x \in V$, and furthermore the Hessian of $F$ at $\B z$ is of rank at least $n$. In addition, when $\B z \in \cal V(\Q) \cap U$, the variety $Y_{\B y, \B z}$ is defined over $\Q$.

    The variety $Y_{\B y,\B z}$ is defined by the intersection of a smooth cubic hypersurface, a quadric whose singular locus has dimension at most $0$, and a hyperplane. We restrict to the hyperplane and intersect with an additional auxiliary hyperplane that has been chosen generically. The resulting variety is the intersection of a smooth cubic hypersurface and a quadratic hypersurface whose singular locus has dimension at most $0$, defined over $\PP^{n-1}$. Furthermore, this variety has a non-singular real point. As noted in our proof of Theorem \ref{t:bdhb} above, the conclusion of Theorem~1.3 of \cite{BDHB:15} is still valid under these circumstances. We thus conclude that the set $Y_{\B y,\B z}$ contains rational points for all $\B z \in \cal V(\Q) \cap U$ whenever $n \ge 29$.
\end{pf}

The argument of \cite{PapaSik} can now be followed through literally, with our Lemma~\ref{l:PS} taking the role of their Lemma~3.3. This completes the proof of Theorem~\ref{t:PS}.

\bibliographystyle{amsplain}
\bibliography{fullrefs}
\end{document}